\newtheorem{thm}{Theorem}[section]
\newtheorem{corollary}[thm]{Corollary}
\newtheorem{lemma}[thm]{Lemma}
\newtheorem{prop}[thm]{Proposition}
\newtheorem{conjecture}[thm]{Conjecture}
\theoremstyle{definition}
\newtheorem{defn}[thm]{Definition}
\newtheorem{remark}[thm]{Remark}
\newtheorem{notation}[thm]{Notation}
\newtheorem{construction}[thm]{Construction}
\newtheorem{assumption}[thm]{Assumption}
\newcommand{\Z}{\mathbb{Z}}
\newcommand{\R}{\mathbb{R}}
\newcommand{\N}{\mathbb{N}}
\newcommand{\Hy}{\mathbb{H}}
\newcommand{\cA}{\mathcal{A}}
\newcommand{\cB}{\mathcal{B}}
\newcommand{\cC}{\mathcal{C}}
\newcommand{\cE}{\mathcal{E}}
\newcommand{\cG}{\mathcal{G}}
\newcommand{\cP}{\mathcal{P}}
\newcommand{\cX}{\mathcal{X}}
\newcommand{\CAT}{\operatorname{CAT}}
\DeclareMathOperator{\isom}{Isom}
\DeclareMathOperator{\stab}{Stab}
\DeclareMathOperator{\homeo}{Homeo}
\DeclareMathOperator{\lcm}{lcm}
\DeclareMathOperator{\Aut}{Aut}
\newcommand{\la}{\langle}
\newcommand{\ra}{\rangle}
\newcommand{\p}{\partial}
\newcommand{\G}{\Gamma}
\newcommand{\oB}{\overline{\p \cB_i}}
\newcommand{\bp}{\overline{p}}
\newcommand{\wZ}{\widetilde{Z}}
\newcommand{\tY}{\widetilde{Y}}
\newcommand{\wt}{\widetilde{\theta}}
\newcommand{\wz}{\tilde{z}}
\definecolor{amethyst}{rgb}{0.6, 0.4, 0.8}
\newcommand{\hide}[1]{}
\title{Quasi-isometric groups with no common model geometry }
\author{Emily Stark, Daniel Woodhouse }
\date{ \today }
\address{Department of Mathematics\\
 Technion - Israel Institute of Technology \\
 Haifa 32000 \\
  Israel }
\email{emily.stark@technion.ac.il, woodhouse.da@technion.ac.il}
\thanks{The first author was supported by the Azrieli Foundation and the Israel Science Foundation (grant 1941/14) and was supported in part at the Technion by a Zuckerman STEM Leadership Fellowship. The second author was supported by the Israel Science Foundation (grant 1026/15).}
\begin{document}

\maketitle

\begin{abstract}
  A \emph{simple surface amalgam} is the union of a finite collection of surfaces with precisely one boundary component each and which have their boundary curves identified.
  We prove that if two fundamental groups of simple surface amalgams act properly and cocompactly by isometries on the same proper geodesic metric space, then the groups are commensurable.
  Consequently, there are infinitely many fundamental groups of simple surface amalgams that are quasi-isometric, but which do not act properly and cocompactly on the same proper geodesic metric space.
\end{abstract}

\section{Introduction}

 A {\it model geometry} for a group is a proper geodesic metric space on which the group acts properly and cocompactly by isometries. Since a finitely generated group is quasi-isometric to any of its model geometries, if two groups have a common model geometry, then the two groups are quasi-isometric. An interesting problem in geometric group theory is to determine for which classes of groups the converse holds. 

  Every group quasi-isometric to hyperbolic $n$-space $\Hy^n$ acts properly and cocompactly by isometries on $\Hy^n$ \cite{tukia,gabai,cassonjungreis,tukia86,tukia94}. There is a dichotomy between groups quasi-isometric to $\Hy^2$ and $\Hy^n$ for $n \geq 3$: all groups quasi-isometric to $\Hy^2$ are commensurable, but there are infinitely many commensurability classes among groups quasi-isometric to $\Hy^n$ for $n \geq 3$. Among virtually non-cyclic free groups, there is one quasi-isometry class and one abstract commensurability class, yet, two such groups do not necessarily share a common model geometry. Mosher--Sageev--Whyte \cite{moshersageevwhyteI} prove if $p,q>2$ are prime, then $\Z/p\Z * \Z/p\Z$ and $\Z/q\Z * \Z / q\Z$ do not have a common model geometry unless $p=q$. 
   
   In this paper, we exhibit an infinite family of one-ended torsion-free hyperbolic groups that contains one quasi-isometry class, infinitely many abstract commensurability classes, and whose members have a common model geometry if and only if they are commensurable. Let $\mathcal{Y}_k$ be the set of spaces homeomorphic to the union of $k \geq 3$ surfaces with one boundary component identified to each other along their boundary curves. Let $\mathcal{C}_k$ be the set of fundamental groups of spaces in $\mathcal{Y}_k$. The main result of this paper is the following. 

  \begin{thm} \label{maintheorem}
    Let $G, G' \in \mathcal{C}_k$. The following are equivalent.
      \begin{enumerate}
	 \item The groups $G$ and $G'$ act properly and cocompactly by isometries on the same proper geodesic metric space.
	 \item The groups $G$ and $G'$ are abstractly commensurable.
	 \item There exists a group $\mathfrak{G}$ that contains $G$ and $G'$ as finite-index subgroups.
      \end{enumerate}
  \end{thm}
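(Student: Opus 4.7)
The implications $(3) \Rightarrow (1)$ and $(3) \Rightarrow (2)$ are essentially formal. For $(3) \Rightarrow (1)$, an overgroup $\mathfrak{G}$ containing $G$ and $G'$ as finite-index subgroups is itself finitely generated and acts properly cocompactly on its Cayley graph; the restriction of this action to each of $G$ and $G'$ remains proper and cocompact, yielding a common model geometry. For $(3) \Rightarrow (2)$ one simply observes that $G \cap G'$ has finite index in both. The content of the theorem is therefore the pair of implications $(1) \Rightarrow (2)$ and $(2) \Rightarrow (3)$.

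My plan for $(1) \Rightarrow (2)$ proceeds in two stages. Each $G \in \mathcal{C}_k$ is a one-ended hyperbolic group splitting as a multi-amalgam $\pi_1(S_1) *_{\la c \ra} \cdots *_{\la c \ra} \pi_1(S_k)$; this splitting is its cyclic JSJ, with Bass-Serre tree $T_G$ having two vertex types (surface subgroups and the central boundary subgroup). When $G$ acts properly cocompactly on $X$, this structure should be visible in $X$ as a canonical, $G$-equivariant family of quasi-lines (axes of conjugates of $c$) whose complement falls into quasi-convex chambers stabilized by conjugates of the $\pi_1(S_i)$. Invoking quasi-isometric rigidity for groups in $\mathcal{C}_k$, the same family must be preserved by \emph{any} proper cocompact isometric action of another group in $\mathcal{C}_k$ on $X$. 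In particular $G'$ preserves it, so $G$ and $G'$ act on the same tree $T$, the dual of the wall/chamber pattern in $X$, which is simultaneously the JSJ tree of each.

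At the second stage, the quotients $X/G$ and $X/G'$ are both realized as simple surface amalgams whose universal covers have been compatibly identified with $X$. I would then apply a Leighton-type theorem for graphs of surfaces: two such complexes with identified universal covers admit a common finite cover. Its fundamental group embeds as a finite-index subgroup of both $G$ and $G'$, giving $(2)$. For $(2) \Rightarrow (3)$, let $H$ be a common finite-index subgroup of $G$ and $G'$, and let $X_H$ be any model geometry of $H$ (for instance, the universal cover of a simple surface amalgam whose fundamental group contains $H$). Inside $\isom(X_H)$, set $\mathfrak{G} := \la G, G' \ra$; because $H$ has finite index in both $G$ and $G'$, a covering-space argument should show that the $\mathfrak{G}$-action on $X_H$ remains proper and cocompact, forcing $H$ (and hence each of $G$, $G'$) to sit with finite index in $\mathfrak{G}$.

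The main obstacle, and the bulk of the work, is the Leighton-type step. Classical Leighton handles 1-complexes, but here the pieces are surfaces with boundary, and an effective matching between the simple surface amalgams $X/G$ and $X/G'$ must simultaneously align the JSJ combinatorics, the homeomorphism types of the surface pieces (genera, numbers of boundary components, orientability), and the degrees with which each boundary wraps around the central cyclic subgroup. The delicate point is to promote the compatibility of wall/chamber structures on $X$ into a genuine finite common cover of the two complexes, consistent across the central curve; controlling how isometries of $X$ permute surface chambers, and showing that this action descends, after a finite cover, to an isomorphism of graphs of surfaces, is where I expect most of the argument to concentrate.
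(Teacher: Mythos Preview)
Your outline has two genuine gaps, one in each of the nontrivial implications.

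\textbf{The implication $(1)\Rightarrow(2)$.} You reduce this to a ``Leighton-type theorem for graphs of surfaces'': two simple surface amalgams with isomorphic universal covers should share a finite cover. No such theorem is available; this is precisely Haglund's open conjecture for special cube complexes, which the paper explicitly flags as unproven. The paper circumvents it entirely. Rather than attempting a common-cover argument, the paper proves $(1)\Rightarrow(3)$ directly: from the common model geometry $X$ it extracts, via Mosher--Sageev--Whyte, actions of the surface vertex groups $G_i,G_i'$ on common trees $T_i$; it then proves the key technical fact (Proposition~\ref{prop:comm_tr_len}) that the translation-length vectors of the boundary elements on these trees are commensurable. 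This is what allows the trees to be glued into a single $\CAT(0)$ square complex $\cX$ carrying geometric actions of both $G$ and $G'$. Finally, a rigid edge-coloring of the JSJ tree is arranged so that the color-preserving automorphism group $\Aut_c(\cX)$ acts properly; both $G$ and $G'$ sit inside it with finite index. Your proposal never isolates the translation-length commensurability, and this is the step that replaces the missing Leighton theorem.

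\textbf{The implication $(2)\Rightarrow(3)$.} Your argument takes a common finite-index subgroup $H$, chooses a model geometry $X_H$ for $H$, and then forms $\langle G,G'\rangle\leq\isom(X_H)$. But there is no reason $G$ or $G'$ acts on $X_H$: a finite-index inclusion $H\hookrightarrow G$ does not promote an $H$-action to a $G$-action. Abstract commensurability does not formally imply the existence of a common overgroup; this is exactly why $(2)\Rightarrow(3)$ has content. The paper instead invokes the explicit commensurability classification (Theorem~\ref{thm:acclasses}): within each commensurability class in $\cC_k$ there is a minimal element $\mathfrak{G}=\pi_1(Y_0)$ (surfaces with coprime Euler characteristics) containing every member of the class as a finite-index subgroup.
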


  Theorem \ref{maintheorem} together with the following results proves there are infinitely many groups in $\mathcal{C}_k$ that are quasi-isometric but do not act properly and cocompactly on the same proper geodesic metric space. For each $k \geq 3$, all groups in $\mathcal{C}_k$ are quasi-isometric. Indeed, using the JSJ tree of Bowditch \cite{bowditch} and the techniques of Behrstock--Neumann \cite{behrstockneumann}, Malone proved that if $G \in \mathcal{C}_k$ and $G' \in \mathcal{C}_{\ell}$, then $G$ and $G'$ are quasi-isometric if and only if $k = \ell$ \cite[Theorem 4.14]{malone}. On the other hand, there are infinitely many abstract commensurability classes within $\mathcal{C}_k$. The abstract commensurability classification within $\cC_k$ was given by the first author \cite{stark} for $k=4$ and easily extends to arbitrary $k$; see also \cite{danistarkthomas}.  
  
  The main result of this paper is an example of rigidity that may manifest within a class of groups that is not quasi-isometrically rigid. (See Dru{\c{t}}u--Kapovich \cite{drutukapovich} for background on quasi-isometric rigidity.) A class of groups exhibits {\it action rigidity} if whenever two groups in the class act properly and cocompactly on the same proper geodesic metric space, the groups are (abstractly) commensurable. This form of rigidity fails in general. For example, Burger--Mozes \cite{burgermozes} provide examples of simple groups which act geometrically on the product of two infinite trees. Nonetheless, this phenomenon is intriguing to explore within other classes of finitely generated groups for which the quasi-isometry and commensurability classifications do not coincide.

  \subsection{Method of Proof}
  
   The main aim of the paper is to prove Condition (1) implies Condition (3): if two groups $G,G' \in\cC_k$ have a common model geometry, then there is a group which contains both $G$ and $G'$ as finite-index subgroups. To accomplish this goal, we prove in Theorem~\ref{thm:the_isomorphism} if $G$ and $G'$ have a common model geometry, then there exists a particularly nice model geometry for $G$ and $G'$: a $2$-dimensional $\CAT(0)$ cube complex $\cX$. 
    
   The construction of the square complex $\cX$ relies heavily on the structure of the visual boundary of $G$ and $G'$ and the corresponding JSJ decompositions of these groups given by Bowditch \cite{bowditch}. In particular, if $G$ and $G'$ have a common model geometry $X$, then the visual boundaries of $G$ and $G'$ are homeomorphic to the visual boundary of $X$. Bowditch constructs the JSJ tree of a one-ended hyperbolic group $G$ using the topology of the visual boundary of $G$, where the {\it JSJ tree} of $G$ is the Bass-Serre tree of the JSJ decomposition of $G$. Thus, if $G$ and $G'$ have a common model geometry, then $G$ and $G'$ act by isometries on the same JSJ tree $T$. 
   
   We use the actions of $G$ and $G'$ on the JSJ tree $T$ and the notion of weak convex hull of Swenson \cite{swenson} to define a ``coarse graph of spaces'' decomposition of $X$ which is preserved by $G$ and $G'$. The work of Mosher--Sageev--Whyte \cite{moshersageevwhyteI} allows us to replace the ``vertex spaces'' of the coarse graph of spaces with locally-finite bushy trees on which the vertex groups of $G$ and $G'$ act properly and cocompactly by isometries.
   We then analyze the action of vertex groups of $G$ and $G'$ on the bushy trees. In particular, we prove in Proposition~\ref{prop:comm_tr_len} that   
   distinguished elements in $G$ and $G'$ act on lines in the trees with translation lengths which satisfy a certain commensurability condition. This condition allows us to glue the trees together using strips to build a CAT$(0)$ square complex $\cX$ on which $G$ and $G'$ act properly and cocompactly by isometries. 
   
   The automorphism group of the cube complex $\mathcal{X}$ could be uncountable; the groups $G$ and $G'$ need not be finite-index subgroups of this automorphism group. 
   We argue there exist actions of $G$ and $G'$ on $\mathcal{X}$ that induce isometries of $T$ which preserve a \emph{rigid edge coloring} of~$T$.
   We prove in Theorem~\ref{thm:NewAction} the group of automorphisms of $\mathcal{X}$ that induce a color-preserving isometry of $T$ acts properly on $\cX$ and contains both $G$ and $G'$ as finite-index subgroups.
    
   The other implications in Theorem~\ref{maintheorem} follow from previous work. By \cite{stark,danistarkthomas}, two groups in $\cC_k$ are abstractly commensurable if and only if the {\it Euler characteristic vectors} of the two groups are commensurable vectors. (For a precise statement, see Section~\ref{sec:main_thm}.) The author shows there is a minimal element within $\cC_k$ within each abstract commensurability class, hence Condition~(2) and Condition~(3) are equivalent, and Condition~(3) implies Condition~(1). 
   
    \begin{figure}
	\begin{overpic}[scale=.25,tics=10]{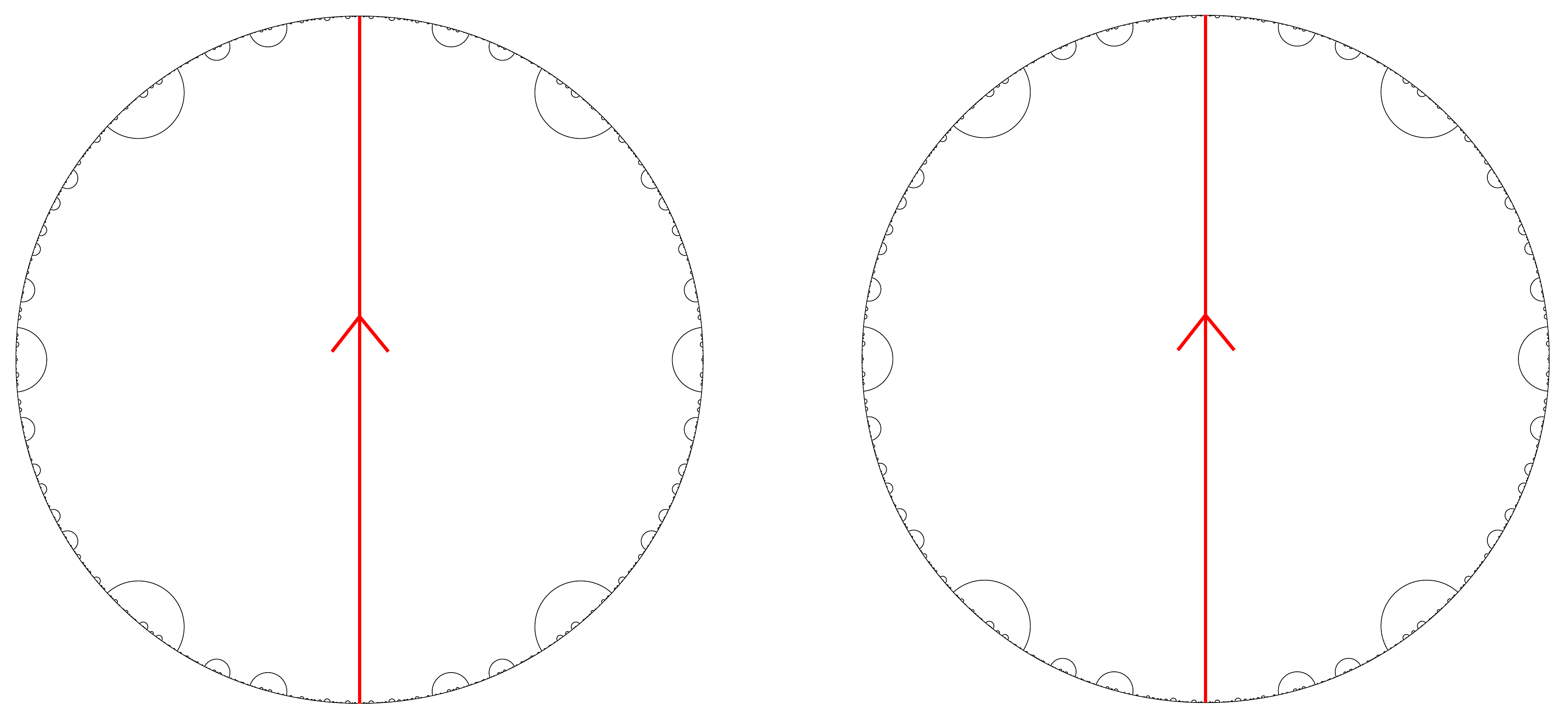} 
	 \end{overpic}
		 \caption{ Suppose $Y$ is the union of four surfaces with genus one and one boundary component glued along their boundary curves. A simple example of a model geometry for $\pi_1(Y) \in \cC_4$ consists of copies of the hyperbolic plane glued along geodesic lines; two copies are drawn above. In this case, the hyperbolic metrics can be chosen so that $\isom(\tY)$ does not act properly on $\tY$. The image was drawn with lim \cite{mcmullen}.}
	 \label{figure:hyp_plane}
   \end{figure}    
  
    \subsection{Generalizations}
    
    It is natural to ask if our results extend from $\cC_k$ to its quasi-isometry class, and, more generally, to the class $\cC$ of one-ended hyperbolic groups that are not Fuchsian and whose JSJ decomposition over two-ended subgroups contains only two-ended and maximally hanging Fuchsian vertex groups. Indeed, many of the techniques of this paper are useful in pursuing these more general results. For example, we believe if $G, G' \in \cC$ have a common model geometry, then the results in Section~\ref{sec:cubulation} can be generalized to produce geometric actions on a common $\CAT(0)$ square complex $\cX$. However, it is significantly harder to generalize the full strength of Theorem~\ref{thm:the_isomorphism}: the existence of actions that preserve a rigid edge coloring of the Bass-Serre tree cannot be deduced from our techniques in the case that $G$ and $G'$ split over non-isomorphic graphs or in the case $G$ and $G'$ split over a suitably complicated graph. 
    We believe that deducing commensurability from the existence of a common model geometry requires the following generalization of Leighton's graph covering theorem~\cite{leighton} (see also~\cite{basskulkarni,woodhouse18}), first asked by Haglund~\cite[Problem 2.4]{haglund06}. 
    
   \begin{conjecture}
    Let $X$ and $X'$ be special cube complexes with isomorphic universal covers. 
    Then, there exist finite-sheeted covers $\hat{X} \rightarrow X$ and $\hat{X}' \rightarrow X$ such that $\hat{X} \cong \hat{X}'$.
   \end{conjecture}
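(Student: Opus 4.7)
The plan is to follow the strategy of Bass--Kulkarni \cite{basskulkarni}, who reprove Leighton's theorem by realizing the two graphs as quotients of $\tilde{X}$ by cocompact lattices inside $\Aut(\tilde{X})$ and then invoking a Haar measure commensurability argument. For general special cube complexes the obstruction to a direct translation is that $\Aut(\tilde{X})$ typically does not act cocompactly on $\tilde{X}$ once $\dim \tilde{X} \geq 2$, so one must first cut down to a subgroup that does.

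The natural first step is to exploit specialness: by Haglund--Wise, each of $X$ and $X'$ admits a canonical local isometry into the Salvetti complex $S_\Gamma$ of some right-angled Artin group $A_\Gamma$, equipping $\tilde X$ with a canonical labelling of its hyperplanes and of its oriented edges by the vertex set of $\Gamma$. Without loss of generality (after enlarging $\Gamma$) one may take the same $\Gamma$ for both $X$ and $X'$, and one may adjust the given isomorphism $\tilde{X} \to \tilde{X}'$, if necessary by composing with a deck transformation on each side and passing to a finite cover, so that it respects this labelling. Let $\Aut^c(\tilde{X})$ denote the group of label-preserving automorphisms of $\tilde{X}$. Then both $\pi_1(X)$ and (the transported copy of) $\pi_1(X')$ sit as discrete subgroups of $\Aut^c(\tilde X)$, and the conjecture reduces to showing these two subgroups are commensurable there.

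The second step is to endow $\Aut^c(\tilde{X})$ with the compact-open topology and show that it is locally compact and acts cocompactly on $\tilde X$; granted this, $\pi_1(X)$ and $\pi_1(X')$ become cocompact lattices in a common locally compact group and one can attempt to imitate Bass--Kulkarni by comparing Haar covolumes to produce a commensurating element, yielding a common finite-index overgroup and hence a common finite cover $\hat{X} \cong \hat{X}'$.

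The hard part is precisely the cocompactness of $\Aut^c(\tilde{X})$, and I expect this to be the main obstacle: in dimension $\geq 2$ there is no analogue of the biregular-tree symmetry underlying Leighton's proof, and hyperplane intersection patterns in $\tilde{X}$ can break any putative vertex transitivity even after labels are fixed. A natural reduction to try as a warm-up, following ideas already present in the paper under review and in \cite{woodhouse18}, is the case in which $X$ splits as a graph of lower-dimensional special cube complexes along a single hyperplane: one could set up an induction on dimension by combining Leighton's theorem for the underlying Bass--Serre graph with the inductive hypothesis on the vertex and edge spaces, taking care that the gluing data (attaching maps of hyperplanes, translation lengths along edge spaces) match up to commensurability in the spirit of Proposition~\ref{prop:comm_tr_len} of the present paper.
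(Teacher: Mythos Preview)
The statement you are attempting to prove is explicitly a \emph{conjecture} in the paper, not a theorem: the authors do not prove it and indeed present it as an open problem attributed to Haglund. There is therefore no proof in the paper to compare your proposal against.

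Your proposal is, as you yourself acknowledge, a strategy outline rather than a proof, and you have correctly identified the essential gap: there is no reason to expect $\Aut^c(\tilde X)$ to act cocompactly on $\tilde X$ in general, and without this the Bass--Kulkarni covolume argument does not get off the ground. A further issue worth flagging is the step where you ``adjust the given isomorphism $\tilde X \to \tilde X'$ \ldots\ so that it respects this labelling'': this is not obviously achievable by composing with deck transformations and passing to finite covers, and in fact arranging such a label-preserving isomorphism is close to the content of the conjecture itself, so this step risks circularity. The inductive reduction you sketch at the end is a reasonable line of attack on restricted subclasses, but as stated it is a plan, not an argument.
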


   Haglund~\cite{haglund06} provided a positive result for uniform lattices in regular right-angled Fuchsian buildings, provided the chamber is a polygon with at least six edges, and subsequently, he provided a positive result for right-angled buildings associated to graph products of finite groups~\cite{haglund08}.
  
    In addition, based on results by Dani--Stark--Thomas \cite{danistarkthomas}, we conjecture Conditions~(2) and (3) are not equivalent in this generality.  Finally, as explained in Remark~\ref{remark:RACG}, the conclusion of our main theorem extends to a related class of right-angled Coxeter groups.

    \subsection*{Acknowledgements}\label{ackref}
    The authors are thankful for helpful discussions with Michah Sageev. The authors thank the anonymous referee for useful comments.

  
\section{Preliminaries} \label{sec:prelims}

\subsection{Metric notions and group actions}

  \begin{defn}
    Let $(X,d_X)$ and $(Y,d_Y)$ be metric spaces. A map $\phi$ from $X$ to $Y$ is a \emph{$(K,\epsilon)$-quasi-isometry} if there are constants $K\geq 1$ and $\epsilon \geq 0$ such that the following hold: 
      \begin{enumerate}
	\item The map $\phi$ is a {\it $(K,\epsilon)$-quasi-isometric embedding}: for all $x_1, x_2 \in X$, \[\frac{1}{K}\,d_X(x_1,x_2)-\epsilon \leq d_Y\bigl(\phi(x_1),\phi(x_2)\bigr)\leq K\,d_X(x_1,x_2)+\epsilon.\]
	\item The map $\phi$ is {\it $\epsilon$-quasi-surjective}: every point of $Y$ lies in the $\epsilon$-neighborhood of the image of $f$. 
      \end{enumerate}
  \end{defn}

  \begin{defn}
    A subspace $C$ of a metric space $X$ is said to be {\it quasi-convex} if there exists a constant $k>0$ such that for all $x,y \in C$, each geodesic joining $x$ to $y$ is contained in the $k$-neighborhood of $C$. Let $G$ be a finitely generated group and $d$ a word metric on $G$ defined with respect to a finite generating set. A subgroup $H \leq G$ is {\it quasi-convex} if there exists a constant $k>0$ such that for all $h,h' \in H$, each geodesic joining $h$ to $h'$ in $G$ is contained in the $k$-neighborhood of $H$ in $G$.
  \end{defn}

  \begin{notation}
   Suppose $A$ is a subspace of a metric space $X$ and $R \in \R^+$. We use $N_R(A)$ to denote the $R$-neighborhood of $A$ in $X$. If $Z$ is a topological space and $z \in Z$, the pair $(Z,z)$ is a {\it pointed topological space}. 
  \end{notation}
  
  \begin{defn} (Group actions.)
    An {\it action} of a group $G$ on a topological space $X$ is a homomorphism $\Phi:G \rightarrow \homeo(X)$, where $\homeo(X)$ is the group of homeomorphisms of $X$. An {\it action by isometries} of a group $G$ on a metric space $X$ is a homomorphism $\Phi:G \rightarrow \isom(X)$, where $\isom(X)$ is the group of isometries of $X$. For $x \subset X$, we write $g \cdot x$ for the image of $x$ under $\Phi(g)$ and $G \cdot x$ for $\{ g \cdot x \mid g \in G\}$. An action $\Phi$ is {\it faithful} if $\ker(\Phi) = \{1\}$. An action of a group $G$ on a space $X$ is {\it cocompact} if there exists a compact set $K \subset X$ such that $X = G \cdot K$. If $X$ is a metric space, an action of $G$ on $X$ is {\it proper} if for each $x \in X$, there is a number $\epsilon>0$ so that the set $\{g \in G \, | \, d(x,g\cdot x) \leq \epsilon \}$ is finite.  
  \end{defn}

  We will make use of the following elementary lemma, which follows easily from standard techniques; see \cite{bowditch06,bridsonhaefliger}. 

  \begin{lemma}\label{lemma:subgroup}
  Let $X$ be a proper metric space, and let $G$ be a group which acts properly on~$X$.
  If $H \leqslant G$ acts cocompactly on $X$, then $G$ acts on $X$ cocompactly, and $H$ is a finite-index subgroup of $G$.
  \end{lemma}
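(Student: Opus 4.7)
The plan is to dispatch the two conclusions in sequence, using standard arguments for proper isometric actions on proper metric spaces. Fix a compact set $K \subseteq X$ with $X = H \cdot K$, guaranteed by the cocompactness of the $H$-action. For cocompactness of $G$, simply observe that $X = H \cdot K \subseteq G \cdot K \subseteq X$, so $K$ is also a compact fundamental domain for the $G$-action.

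For the finite-index claim, the strategy is to produce a finite set $S \subseteq G$ with $G = H \cdot S$. Fix a basepoint $x_0 \in X$ and set $S := \{g \in G : g \cdot x_0 \in K\}$. Given an arbitrary $g \in G$, the point $g \cdot x_0$ lies in $X = H \cdot K$, so one may write $g \cdot x_0 = h \cdot k$ for some $h \in H$ and $k \in K$; then $h^{-1} g \in S$, and so $g \in H \cdot S$. Therefore $G = H \cdot S$ and $[G : H] \leq |S|$. To see that $S$ is finite, set $K' = K \cup \{x_0\}$ and note that if $g \cdot x_0 \in K$ then $g \cdot x_0 \in (g \cdot K') \cap K'$, so $S \subseteq \{g \in G : g \cdot K' \cap K' \neq \emptyset\}$, and the latter set is finite by the properness of the $G$-action on $X$.

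The only mildly technical point is the passage from the definition of proper action given in the preliminaries (finiteness of $\{g \in G : d(x, g \cdot x) \leq \epsilon\}$ for some $\epsilon > 0$ at each $x$) to the equivalent and more useful formulation that $\{g \in G : g \cdot K' \cap K' \neq \emptyset\}$ is finite for every compact $K' \subseteq X$. For isometric actions on a proper metric space, this equivalence is standard: one first shows that stabilizers are finite and the orbit of $x_0$ is uniformly discrete in $X$, then uses properness of $X$ to upgrade to finiteness of $\{g \in G : d(x_0, g \cdot x_0) \leq R\}$ for every $R > 0$, from which the compact-set formulation follows by the triangle inequality. Once this equivalence is granted, no further ideas are required, which is presumably why the authors describe the lemma as following easily from standard techniques and only cite \cite{bowditch06,bridsonhaefliger}.
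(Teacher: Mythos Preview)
The paper does not actually prove this lemma: it only states that it ``follows easily from standard techniques'' and cites \cite{bowditch06,bridsonhaefliger}. Your argument is correct and is precisely the standard one found in those references---cocompactness of $G$ is trivial since $H\leq G$, and finite index follows by showing $G = H\cdot S$ for the finite set $S=\{g\in G: g\cdot x_0\in K\}$, with finiteness of $S$ coming from the compact-set formulation of properness. Your care in noting that the paper's pointwise definition of properness must be upgraded to the compact-set version (and that this requires the action to be by isometries on a proper space) is appropriate; the lemma as stated omits the word ``isometries,'' but every application in the paper is to isometric actions, so this is a harmless omission in the statement rather than a gap in your proof.
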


\subsection{The JSJ decomposition}

  The groups considered in this paper are fundamental groups of finite graphs of groups; for background, see \cite{scottwall}, \cite{serre}. We use the following notation. 

  \begin{defn} \label{def:carry} 
   A \emph{graph of groups} $\mathcal{G}$ is a graph $\Gamma = (V\Gamma, E\Gamma)$ with a \emph{vertex group} $G_v$ for each $v \in V\Gamma$, an \emph{edge group} $G_e$ for each $e \in E\Gamma$, and \emph{edge maps}, which are injective homomorphisms $\Theta^{\pm}_e: G_e \rightarrow G_{\pm e}$ for each $e =(-e,+e) \in E\Gamma$.

          \begin{figure}
   \begin{overpic}[width=.7\textwidth,tics=10]{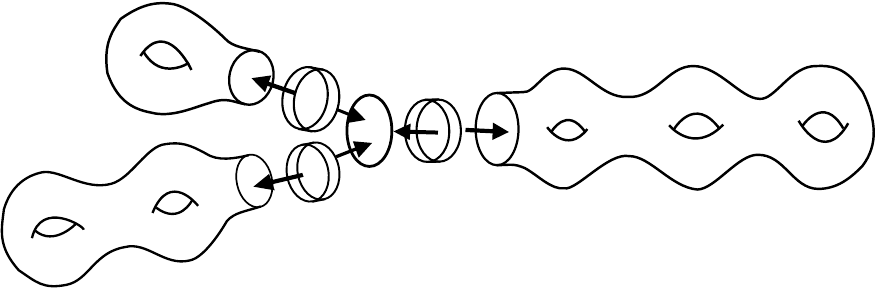} 
    \put(7,25){$Y_{v_1}$}
    \put(-5,5){$Y_{v_2}$}
    \put(80,7){$Y_{v_3}$}
    \put(32,27){$Y_{e_1} \times I$}
    \put(32,7){$Y_{e_2} \times I$}
    \put(41,23){$Y_{v_0}$}
    \put(48,23){$Y_{e_3} \times I$}
   \end{overpic}
    \caption{The graph of spaces decomposition for $G$. The black arrows indicate the attaching maps and $I$ denotes the interval $[-1,1]$.} 
    \label{figure:gr_spaces}
    \end{figure}
   
   A \emph{graph of spaces} associated to a graph of groups $\mathcal{G}$ is a space $Y$ constructed from a pointed \emph{vertex space} $(Y_v, y_v)$ for each $v \in V\Gamma$ with $\pi_1(Y_v,y_v) = G_v$, a pointed {\it edge space} $(Y_e, y_e)$ for each $e = (-e,+e) \in E\Gamma$ such that $\pi_1(Y_e,y_e) = G_e$, and maps $\theta^{\pm}_e: (Y_e,y_e) \rightarrow (Y_{\pm e}, y_{\pm{e}})$ such that $(\theta^{\pm}_e)_* = \Theta^{\pm}_e$.
   The space $Y$ is $$\left(\bigsqcup_{v \in V\Gamma} Y_v \bigsqcup_{e \in E\Gamma} \left( Y_e \times [-1,1]\right)\right) \; \Big/ \; \{(y,\pm 1) \sim \theta_e^{\pm}(y) \mid (y, \pm 1) \in Y_e\times [-1,1] \}. $$
   The \emph{fundamental group} of the graph of groups $\mathcal{G}$ is $\pi_1(Y)$. The {\it underlying graph} of the graph of groups $\cG$ is the graph $\G$.
   A group $G$ \emph{splits as graph of groups} if $G$ is the fundamental group of a non-trivial graph of groups.
    \end{defn}
   
    \begin{defn} \cite{bowditch}
      A {\it bounded Fuchsian group} is a Fuchsian group that is convex cocompact but not cocompact; that is, the group acts cocompactly on the convex hull of its limit set in $\p \Hy^2$, but does not act cocompactly on $\Hy^2$. The convex core of the quotient is a compact orbifold with non-empty boundary consisting of a disjoint union of compact $1$-orbifolds. The {\it peripheral subgroups} are the maximal two-ended subgroups which project to the fundamental groups of the boundary $1$-orbifolds.  A {\it hanging Fuchsian} subgroup $H$ of a group is a virtually-free quasiconvex subgroup together with a collection of {\it peripheral} two-ended subgroups, which arise from an isomorphism of $H$ with a bounded Fuchsian group. A {\it full quasiconvex subgroup} of a group $G$ is a subgroup that is not a finite-index subgroup of any strictly larger subgroup of $G$.
    \end{defn}

    \begin{thm}\cite[Thm 0.1]{bowditch}
      Let $G$ be a one-ended hyperbolic group that is not Fuchsian. There is a canonical \emph{JSJ decomposition} of $G$ as the fundamental group of a graph of groups such that each edge group is 2-ended and each vertex group is either (1)  $2$-ended; (2) maximal hanging Fuchsian; or, (3) a maximal quasi-convex subgroup not of type (2). These types are mutually exclusive, and no two vertices of the same type are adjacent. Every vertex group is a full quasi-convex subgroup. Moreover, the edge groups that connect to any given vertex group of type (2) are precisely the peripheral subgroups of that group.      
    \end{thm}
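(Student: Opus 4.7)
The plan is to construct the JSJ tree canonically from the topology of the visual boundary $\p G$. Since $G$ is one-ended and hyperbolic, results of Bestvina--Mess and Bowditch--Swarup imply that $\p G$ is a connected, locally connected, metrizable continuum with no global cut points. Splittings of $G$ over two-ended subgroups are reflected in the boundary: the limit set of such a subgroup is a pair of points, and the associated splitting produces a pair of local cut points in $\p G$. So the strategy is to detect every two-ended splitting directly from the topology of $\p G$, and then package all of them into a single canonical tree on which $G$ acts by simplicial automorphisms.

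First, I would analyze \emph{local cut points} of $\p G$ --- points $x$ such that some connected open neighborhood $U$ of $x$ has $U \setminus \{x\}$ disconnected --- and assign a valence to each, measuring the number of local complementary components. I would then impose a $G$-equivariant equivalence relation in which two local cut points are equivalent if they separate the same pairs of points of $\p G$. Equivalence classes come in two geometrically meaningful kinds: classes whose points lie on a topological circle in $\p G$ (signaling a hanging Fuchsian piece, since the limit set of a bounded Fuchsian group is a circle), and finite classes (signaling a two-ended vertex group). The complementary \emph{blocks} of $\p G$ which contain no local cut point in their interior should then produce the rigid quasi-convex vertex groups.

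Second, I would assemble the equivalence classes and the blocks into a $G$-equivariant pretree, which standard pretree-to-tree techniques convert into a simplicial tree $T$ carrying a $G$-action without edge inversions. Edge stabilizers are setwise stabilizers of pairs of equivalent local cut points, and are two-ended because their limit sets are two-point; vertex stabilizers split by construction into the three desired types, and quasi-convexity of vertex groups follows from convergence group dynamics applied to their limit sets in $\p G$. The main obstacle will be the \emph{maximality} and \emph{canonicality} claims. Canonicality is essentially automatic, since the construction depends only on the topology of $\p G$ and every automorphism of $G$ extends to a homeomorphism of $\p G$. Maximality is more delicate: one must check that no further two-ended splitting of a rigid vertex group can be read off from the boundary, that each hanging Fuchsian vertex group absorbs every adjacent circle arc, and that the peripheral two-ended subgroups of such a hanging Fuchsian vertex group coincide exactly with the incident edge groups. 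It is at this last step that one genuinely needs word-hyperbolicity together with a fine analysis of cut-point structure for convergence group actions on $\p G$.
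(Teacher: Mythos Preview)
The paper does not prove this statement; it is quoted as Bowditch's Theorem~0.1 and used as a black box throughout. So there is no ``paper's own proof'' to compare against, and anything you write here would be standing in for Bowditch's argument rather than for one in this paper.

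That said, your outline is broadly faithful to Bowditch's actual strategy: build the tree from the local cut-point structure of $\partial G$, pass through a pretree, and identify vertex types via convergence-group dynamics. One correction worth making: the limit set in $\partial G$ of a maximal hanging Fuchsian vertex group is a cyclically ordered Cantor set (what Bowditch calls a \emph{necklace}), not a topological circle---indeed Proposition~\ref{prop:bow_cyclic_order}(2) in this paper states exactly that. The circle picture arises only after embedding the Cantor set in $S^1$ and filling in the gaps. This matters for the equivalence relation you describe, which in Bowditch is formulated in terms of cut pairs of equal valence rather than in terms of circles sitting inside $\partial G$.
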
 
    
   \begin{remark} \label{remark:gr_of_gps} (JSJ decomposition of $G,G' \in \cC_k$.)
    We will use the notation set in this remark throughout the paper. 
    Let $G, G' \in \cC_k$. The JSJ decompositions of $G$ and $G'$ are graphs of groups $\cG$ and $\cG'$, respectively, which are described as follows. The graph $\G$ associated to the graph of groups $\cG$ has vertex set $V\Gamma = \{u_0, \ldots, u_k\}$ and edge set $E\Gamma = \{(u_0, u_i) \mid 1 \leq i \leq k\}$. The vertex groups are $G_0 = G_{u_0} =  \langle g_0 \rangle \cong \mathbb{Z}$ and $G_i = G_{u_i} = \pi_1(\Sigma_i)$ for $1 \leq i \leq k$, where $\Sigma_i$ is an orientable surface with negative Euler characteristic and one boundary component. Let $g_i \in G_i$ be the homotopy class of the boundary curve of $\Sigma_i$ with some choice of orientation.
    If $e_i = (u_0,u_i) \in E\G$, then the edge group $G_{e_i}$ is $\langle \overline{g}_i\rangle \cong \mathbb{Z}$. The edge map $\Theta_e^-: G_e \rightarrow G_0$ is given by the map $\overline{g}_i \mapsto g_0$ and the edge map $\Theta_e^+: G_e \rightarrow G_i$ is given by the map $\overline{g}_i \mapsto g_i$.
   
    A graph of spaces $Y$ associated to the graph of groups $\cG$ has vertex spaces $Y_{u_0} \cong S^1$ and $\{ Y_{u_i} \cong \Sigma_i \,|\, 1 \leq i \leq k\}$. If $e = (u_0,u_i) \in E\G$, then $Y_e \cong S^1$. The attaching map $\theta_e^-: Y_e \rightarrow Y_{u_0}$ is a homeomorphism, and the attaching map $\theta_e^+:Y_e \rightarrow Y_{u_i}$ is a homeomorphism from $Y_e$ onto the boundary curve of $\Sigma_i$. An example appears in Figure~2.
   
    The JSJ tree $T$ of $G$ is an infinite biregular tree with vertices of valence $k$ and vertices of valence the cardinality of $\N$. The finite-valence vertices of $T$ are exactly those stabilized by conjugates of the $2$-ended vertex group $G_0$ in the JSJ decomposition of $G$, and the infinite-valence vertices of $T$ are exactly those stabilized by conjugates of a maximal hanging-Fuchsian subgroup $G_i = \pi_1(\Sigma_i)$ of~$G$.
    
    The JSJ decomposition $\cG'$ of $G'$ is similar; the underlying graph is the graph $\Gamma$. The vertex groups of $\cG'$ are $G_0' = G'_{u_0} = \la g_0' \ra \cong \Z$ and $G_i' = G'_{u_i} = \pi_1(\Sigma_i')$ for $1 \leq i \leq k$, where $\Sigma_i'$ is an orientable surface with negative Euler characteristic and one boundary component. Let $g_i' \in G_i'$ be the homotopy class of the boundary curve of $\Sigma_i'$ with some choice of orientation. If $e_i = (u_0,u_i) \in E\G$, then the edge group $G'_{e_i}$ is $\langle \overline{g}'_i\rangle \cong \mathbb{Z}$. The edge map $\Theta_{e_i}^{'-}: G_{e_i}' \rightarrow G_0'$ is given by the map $\overline{g}_i' \mapsto g_0'$ and the edge map $\Theta_{e_i}^{'+}: G_{e_i}' \rightarrow G_i'$ is given by the map $\overline{g}_i' \mapsto g_i'$. A graph of spaces $Y'$ associated to $\cG'$ is analogous to the space $Y$. 
    \end{remark}
    
    An example of a model geometry for a group in $\cC_k$ appears in Figure~1.

    \begin{notation}
      If $X$ is a $\delta$-hyperbolic metric space, let $\partial X$ denote the visual boundary of~$X$. Similarly, if $G$ is a $\delta$-hyperbolic group, let $\partial G$ denote the visual boundary of $G$. For the definition of visual boundary and its basic properties, see \cite{bridsonhaefliger} and the survey \cite{kapovichbenakli}. 
    \end{notation}
    
  \begin{remark} \label{remark:same_JSJ_tree} (Isomorphic JSJ trees for groups which have a common model geometry.) 
    The construction of the JSJ tree of a $\delta$-hyperbolic group given by Bowditch uses only the topology of $\p G$. (See Section 3 in~\cite{bowditch}.) Thus, if $G, G' \in \cC_k$ act properly and compactly on a proper geodesic metric space $X$, then, since both $\partial G$ and  $\partial G'$ are homeomorphic to $\partial X$, the groups $G$ and $G'$ have isomorphic JSJ trees, which we denote by $T$. Moreover, any element of $\isom(X)$ induces a homeomorphism of $\partial X$ and, therefore, yields an isometry of~$T$. Thus, there is a homomorphism $\Phi_T: \isom(X) \rightarrow \isom(T)$ that when composed with a homomorphism $\Phi:G \rightarrow \isom(X)$ or $\Phi': G' \rightarrow \isom(X)$ gives the action of $G$ or $G'$ on $T$ by isometries.
    \end{remark}
    
    We will make use of the following details of the cut point structure of $\partial G$. 

  \begin{defn} \label{cyclicordering}
    A set $\Delta$ is \emph{cyclically ordered} if there is a quaternary relation $\delta$ on $\Delta$ such that for every finite subset $F \subseteq \Delta$ there is an embedding $F \hookrightarrow S^1$ such that $\delta(w,x,y,z)$ holds for $w,x,y,z \in F$ if $x, z$ are contained in distinct components of $S^1 - \{w,y\}$.
  \end{defn}

  \begin{prop} \cite{bowditch} \label{prop:bow_cyclic_order}
    \begin{enumerate}
      \item If $v \in VT$ such that $G_v$ is a 2-ended subgroup, then $\partial G_v \subseteq \partial G$ is a unique pair of cut points that separates $\partial G$ into $n$ components, where $n$ is the valence of both local cut points $\{x, y\} = \partial G_v$. 
      \item If $v \in VT$ is a maximal hanging Fuchsian subgroup, then $\partial G_v \subseteq \partial G$ is a cyclically ordered Cantor set. The cyclic order is determined by the topology of $\partial G$: the relation $\delta(w,x,y,z)$ holds if $x, z$ are in distinct components of $\partial G - \{w,y\}$.
    \end{enumerate}
  \end{prop}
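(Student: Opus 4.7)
The plan is to deduce both parts from the dynamics of $G$ on $\partial G$ together with the quasi-convexity of the vertex groups furnished by Bowditch's JSJ decomposition.

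For part~(1), pass to a finite-index infinite cyclic subgroup of $G_v$ generated by a hyperbolic element $g \in G$, so that $\partial G_v = \{x,y\}$ is the attracting/repelling fixed point pair of $g$. For each of the $n$ edges $e_1, \ldots, e_n$ of $T$ incident to $v$, let $T_i$ denote the component of $T \setminus \{v\}$ containing $e_i$, and let $H_i \leq G$ be the subgroup generated by the vertex stabilizers $G_w$ for $w \in T_i$. I would define $\Lambda_i \subseteq \partial G$ to be the limit set of a quasi-convex hull of $H_i$. The key claims are: first, $\partial G = \{x,y\} \sqcup \bigsqcup_{i=1}^n \Lambda_i$ as a set, because any bi-infinite geodesic in $G$ whose endpoints avoid $\{x,y\}$ projects to a bi-infinite geodesic in $T$ having a well-defined side of $v$; second, each $\Lambda_i$ is open and connected in $\partial G \setminus \{x,y\}$, using that the adjacent vertex group is hanging Fuchsian, with its Cantor limit set glued up connectedly by the ambient surface structure, or is of type~(3) in the trichotomy with connected boundary. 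Once these claims are in hand, $\{x,y\}$ separates $\partial G$ into exactly $n$ components, so both $x$ and $y$ are local cut points of valence $n$. Uniqueness of the pair $\{x,y\}$ among such valence-$n$ cut pairs follows because two distinct vertices of $T$ with $2$-ended stabilizer yield distinct fixed-point pairs, a consequence of $T$ being a tree.

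For part~(2), $G_v$ is quasi-convex in the hyperbolic group $G$ and is isomorphic to a bounded convex cocompact Fuchsian group $F \leq \isom(\Hy^2)$. Quasi-convexity yields a canonical continuous $G_v$-equivariant embedding $\partial G_v \hookrightarrow \partial G$ whose image is the limit set $\Lambda(G_v)$. Because $F$ is convex cocompact but not cocompact, $\Lambda(F) \subseteq \partial \Hy^2 \cong S^1$ is compact, perfect, and nowhere dense, hence a Cantor set; the cyclic order on $S^1$ restricts to a cyclic order on $\Lambda(F) \cong \partial G_v$. The remaining task is to identify this Fuchsian cyclic order with the topological one induced from $\partial G$. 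For a four-tuple $w,x,y,z \in \partial G_v$, observe that the components of $\partial G \setminus \partial G_v$ are indexed by the edges of $T$ incident to $v$, each corresponding to a peripheral two-ended subgroup and hence to a pair of points in $\partial G_v$; each complementary piece is attached across such a pair and therefore lies entirely inside one of the two open arcs of $S^1 \setminus \{w,y\}$ determined by the Fuchsian cyclic order. Consequently, separation in $\partial G$ by $\{w,y\}$ agrees with separation in $S^1$ when restricted to $\partial G_v$, and the topological definition of $\delta$ recovers the Fuchsian cyclic order.

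The main obstacle in both parts is the same: matching the combinatorial structure of the JSJ tree $T$ near $v$ with the topology of $\partial G$ near $\partial G_v$. Concretely, one must show that the boundary pieces contributed by distinct neighbouring subtrees of $T$ remain disjoint and do not accumulate on one another away from the pair $\{x,y\}$ or from the peripheral pairs of the Fuchsian group. This is where the full quasi-convexity of vertex groups in Bowditch's decomposition theorem, and the resulting control over limit sets, does the essential work.
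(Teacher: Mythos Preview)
The paper does not prove this proposition at all: it is stated with a citation to Bowditch and no proof is given. There is therefore nothing in the paper to compare your attempt against. The authors simply quote the proposition as an input from \cite{bowditch}, where the full analysis of the cut-point/cut-pair structure of $\partial G$ and its relation to the JSJ tree is carried out.

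Your sketch is a reasonable outline of how one would recover the statement from Bowditch's machinery, but be aware that the real work lies in precisely the ``main obstacle'' you flag at the end: establishing that the limit sets $\Lambda_i$ are open, connected, and mutually disjoint away from $\{x,y\}$, and that complementary components in part~(2) attach only along peripheral pairs, requires the full strength of Bowditch's analysis of jumps and necklaces in the boundary. Your sketch also slightly misreads the word ``unique'' in part~(1): the proposition is asserting that $\partial G_v$ is a cut pair (not that it is the unique cut pair of that valence), so the last sentence of your part~(1) argument is addressing a claim the proposition does not make. If you actually want to supply a proof, you should work directly through Sections~3--5 of \cite{bowditch} rather than reinvent the argument.
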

  
  \begin{defn} \label{def:op}
    Let $\Delta$ be a Cantor set with a fixed cyclic order. 
    Let $\gamma$ be a homeomorphism of $\Delta$ that preserves the cyclic order.
    The set $\Delta$ may be embedded into $S^1$ consistent with the cyclic ordering. Fix an orientation on $S^1$, which determines an orientation on $\Delta$. The homeomorphism $\gamma$ can be extended to a homeomorphism of $S^1$ by linearly extending the action across the missing intervals.
    We say $\gamma$ is \emph{orientation preserving} if this extended action preserves the orientation of $S^1$.
    This property of $\gamma$ does not depend on the choice of the embedding of $\Delta$ into $S^1$.
  \end{defn}

  The vertex and edge groups of the JSJ decomposition of $G \in \cC_k$ are quasi-convex, and hence satisfy the {\it bounded packing property}, introduced by Hruska--Wise \cite{hruskawise}.
  
  \begin{defn}
   Let $G$ be a finitely generated group and $d$ a word metric on $G$ defined with respect to a finite generating set. A subgroup $H$ of $G$ has {\it bounded packing in $G$} if for each constant $D$, there is a number $N =N(G,H,D) \geq 2$ so that for any collection of $N$ distinct cosets $gH$ in $G$, at least two are separated by a distance of at least $D$. Similarly, a collection of subsets $\cA = \{A_i\}_{i \in I}$ of a metric space $X$ has {\it bounded packing in $X$} if for each constant $D$, there exists $N = N(D) \geq 2$ so that for any collection of $N$ distinct subspaces $A_i \subset \cA$, at least two are separated by a distance of at least $D$. 
    \end{defn}

  \begin{lemma} \label{lemma:bounded_packing} \cite{gitikmitraripssageev}\cite[Section 4]{hruskawise}
    If $G \in \cC_k$, then the vertex groups of the JSJ decomposition of $G$ have bounded packing in $G$. 
  \end{lemma}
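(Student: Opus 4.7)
The plan is to combine two classical theorems about quasi-convex subgroups of word-hyperbolic groups: the finite-width theorem of Gitik--Mitra--Rips--Sageev and the implication ``finite width $\Rightarrow$ bounded packing'' recorded in Section~4 of Hruska--Wise. Since $G \in \cC_k$ is one-ended and hyperbolic by hypothesis, the whole argument takes place inside a $\delta$-hyperbolic Cayley graph of $G$.

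The first step would be to observe that every vertex group $G_v$ of the JSJ decomposition is quasi-convex in $G$. For the two-ended vertex group $G_0 = \la g_0 \ra$ this is automatic, since every two-ended subgroup of a hyperbolic group is quasi-convex. For a maximal hanging Fuchsian vertex group $G_i = \pi_1(\Sigma_i)$, this is part of the statement of Bowditch's JSJ theorem quoted above: each vertex group is a full quasi-convex subgroup of $G$.

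Next I would invoke the theorem of Gitik--Mitra--Rips--Sageev to conclude that each such $G_v$ has finite width in $G$: there exists an integer $n = n(G, G_v)$ such that for any collection of $n$ distinct cosets $g_1 G_v, \ldots, g_n G_v$, some pair of conjugates $g_i G_v g_i^{-1}$ and $g_j G_v g_j^{-1}$ with $i \neq j$ intersects in a finite subgroup. Finally I would translate finite width into bounded packing by the argument of Hruska--Wise Section~4: suppose toward contradiction that bounded packing failed for $G_v$, so that for some constant $D$ and every $N$ one could find $N$ distinct cosets $g_1 G_v, \ldots, g_N G_v$ pairwise within distance $D$ in $G$. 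Using quasi-convexity of $G_v$ together with $\delta$-hyperbolicity, a pigeonhole argument on how the quasi-convex hulls of these cosets coarsely fellow-travel produces, after passing to a subcollection of cardinality exceeding $n$, a configuration whose pairwise conjugate intersections are all infinite. This contradicts finite width.

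I do not expect a substantive obstacle: the lemma is essentially immediate from the two cited references, and the only verifications required are the quasi-convexity of the JSJ vertex groups (which is in Bowditch's theorem) and the fact that the Hruska--Wise equivalence between finite width and bounded packing applies verbatim in the hyperbolic setting. The only mildly delicate point is to carefully unpack the definition of bounded packing for subgroups (in terms of distances between cosets in $G$) and align it with the width statement phrased in terms of intersections of conjugates.
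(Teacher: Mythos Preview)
Your proposal is correct and matches the paper's approach: the paper gives no proof of this lemma at all, simply citing Gitik--Mitra--Rips--Sageev and Hruska--Wise Section~4, and your sketch unpacks exactly what those citations supply---quasi-convexity of the JSJ vertex groups (from Bowditch's theorem), the finite-width/height theorem of GMRS for quasi-convex subgroups of hyperbolic groups, and the Hruska--Wise deduction of bounded packing. One minor remark: in Hruska--Wise the bounded packing of quasi-convex subgroups in hyperbolic groups is obtained somewhat more directly than via the contradiction you outline, but the route you describe is valid and leads to the same conclusion.
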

 
  Finally, we will use the classification of isometries of a hyperbolic proper geodesic metric space. See \cite[Proposition 4.1]{kapovichbenakli} for background.

  \begin{prop}  \label{prop:elliptic}
        Let $X$ be a proper, hyperbolic geodesic metric space and let $\gamma : X \rightarrow X$ be an isometry of $X$.
        Then exactly one of the following occurs:
        \begin{enumerate}
         \item $\gamma$ is \emph{elliptic}: For any $x \in X$ the $\langle \gamma \rangle$-orbit of $x$ is bounded in $X$.
         \item $\gamma$ is \emph{hyperbolic/loxodromic}: The induced homeomorphism $\partial \gamma: \partial X \rightarrow \partial X$ has precisely two fixed points $\gamma^+, \gamma^- \in \partial X$. For any $x \in X$, the $\langle \gamma \rangle$-orbit map $\mathbb{Z} \rightarrow X$ given by $n \rightarrow \gamma^n x$ is a quasi-isometric embedding, and $$\lim_{n \rightarrow \pm \infty} \gamma^n x = \gamma^{\pm}.$$
         \item $\gamma$ is \emph{parabolic}: The induced homeomorphism $\partial \gamma: \partial X \rightarrow \partial X$ has precisely one fixed point $\gamma^+$, and 
         $$ \lim_{n \rightarrow \pm \infty} \gamma^n x = \gamma^+.$$
        \end{enumerate}
  \end{prop}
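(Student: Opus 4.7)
The plan is the classical trichotomy argument, driven by the behavior of a single orbit and its extension to the Gromov boundary $\partial X$. Fix a basepoint $x_0 \in X$ and consider the orbit $O_{x_0} = \{\gamma^n x_0 \mid n \in \Z\}$. Because $\gamma$ is an isometry, boundedness of $O_{x_0}$ is independent of the basepoint by the triangle inequality, so if $O_{x_0}$ is bounded, we are in case~(1). Otherwise $O_{x_0}$ is unbounded; since $X$ is proper, the Gromov compactification $\overline{X} = X \cup \partial X$ is compact and the accumulation set $A(\gamma) \subseteq \partial X$ of $O_{x_0}$ is nonempty. The isometry $\gamma$ extends to a homeomorphism $\partial\gamma$ of $\partial X$, and $A(\gamma)$ is $\partial\gamma$-invariant.

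The key technical step is to show $1 \leq |A(\gamma)| \leq 2$ and that each point of $A(\gamma)$ is fixed by $\partial\gamma$. If $\xi \in A(\gamma)$ is the limit of a subsequence $\gamma^{n_k} x_0$, then the shifted sequence $\gamma^{n_k+1} x_0$ stays at distance exactly $d(x_0, \gamma x_0)$ from the original, so by the standard criterion for boundary convergence via Gromov products both sequences converge to the same point of $\partial X$; thus $\partial\gamma(\xi) = \xi$. The bound $|A(\gamma)| \leq 2$ follows from the thinness of ideal geodesic triangles in a $\delta$-hyperbolic space: three distinct fixed points on the boundary would yield a $\gamma$-invariant ideal triangle whose coarse center is an almost-fixed point, producing a bounded orbit and contradicting the unboundedness of $O_{x_0}$.

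The trichotomy now splits as stated. If $|A(\gamma)| = 1$, say $A(\gamma) = \{\gamma^+\}$, then by compactness of $\overline{X}$ both $\gamma^n x_0$ and $\gamma^{-n} x_0$ must converge to $\gamma^+$, giving case~(3). If $A(\gamma) = \{\gamma^+, \gamma^-\}$, then a continuity argument using that $\partial\gamma$ is a homeomorphism with exactly two fixed points pairs up the two directions so that, after relabeling, $\gamma^n x_0 \to \gamma^+$ and $\gamma^{-n} x_0 \to \gamma^-$ as $n \to \infty$. For the quasi-isometric embedding claim, observe that for $m < n$ any geodesic segment from $\gamma^m x_0$ to $\gamma^n x_0$ fellow-travels a fixed bi-infinite geodesic from $\gamma^-$ to $\gamma^+$ by the Morse/stability lemma for hyperbolic spaces; combined with $\gamma$-equivariance this yields a linear lower bound $d(\gamma^m x_0, \gamma^n x_0) \geq C|n-m| - C'$, which together with the trivial upper bound $d(\gamma^m x_0, \gamma^n x_0) \leq d(x_0, \gamma x_0)\cdot|n-m|$ delivers the quasi-isometric embedding. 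The main obstacle is the combined fixed-point and cardinality analysis of $A(\gamma)$, both of which genuinely use $\delta$-hyperbolicity; once these are in hand, the loxodromic case follows by standard stability arguments.
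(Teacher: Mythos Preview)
The paper does not prove this proposition; it records the trichotomy as background and refers the reader to \cite[Proposition~4.1]{kapovichbenakli}. So there is no ``paper's proof'' to compare with---you have supplied an argument where the paper simply cites one.

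Your outline is the standard one and is correct in structure: analyze the orbit accumulation set $A(\gamma)\subseteq\partial X$, show it has at most two points (via the coarse centre of an ideal triangle) and that each is $\partial\gamma$--fixed (via the Gromov--product criterion), then split on $|A(\gamma)|$. The elliptic and parabolic cases are fine as written.

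There is, however, a genuine gap in the loxodromic case. Your appeal to ``the Morse/stability lemma'' to obtain the linear lower bound $d(\gamma^m x_0,\gamma^n x_0)\geq C|n-m|-C'$ is circular: the Morse lemma says quasi-geodesics fellow-travel geodesics, but you have not yet shown the orbit is a quasi-geodesic---that is exactly what you are trying to prove. The ``continuity argument'' pairing the two ends is also too vague; nothing about $\partial\gamma$ being a homeomorphism with two fixed points rules out the forward orbit oscillating between them. The clean fix is to observe first that, since $\gamma$ fixes the endpoints of a geodesic $L$ from $\gamma^-$ to $\gamma^+$, every translate $\gamma^n L$ is uniformly (in $n$) Hausdorff--close to $L$; hence $d(\gamma^n x_0,L)=d(x_0,\gamma^{-n}L)$ is uniformly bounded, so the whole orbit lies in a tube around $L$. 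Projecting to $L$, the map $p\mapsto \pi_L(\gamma p)$ is a coarse isometry of $L\cong\R$ fixing both ends, hence a coarse translation by some $t$; if $t$ were coarsely zero the orbit would be bounded. This gives both the pairing of directions and the linear lower bound at once. Equivalently, one can invoke the stable translation length $\tau(\gamma)=\lim_n d(x_0,\gamma^n x_0)/n$ and show $\tau(\gamma)>0$ in this case.
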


 \subsection{The weak convex hull} \label{sec:gos_via_hulls}
 
  Using the notion of a weak convex hull introduced by Swenson \cite{swenson}, we define a subspace of a model geometry for $G \in \cC$ for each vertex and edge of the JSJ tree that will play the role of a coarse graph of spaces decomposition in Section~\ref{sec:cubulation}.
 
  \begin{defn} \label{def:wch}
    Let $X$ be a proper geodesic hyperbolic metric space. The {\it weak convex hull} of a set $A \subset \partial X$, denoted $\textrm{WCH}_X(A)$, is the union of all geodesic lines in $X$ which have both endpoints in $A$. Given a subset $S \subseteq X$ let $\Lambda S = \overline{S} \cap \partial X$ where $\overline{S}$ denotes the closure of $S$ in $X \cup \partial X$.     
     If $H \leq \isom(X)$ then $\Lambda H = \Lambda (H x)$ where $x \in X$. The space $\Lambda H$ does not depend on the choice of $x$.
  \end{defn}
    
  \begin{thm}\cite[Main Theorem]{swenson} \label{thm:pdc_on_hull}
   Let $G$ act properly and cocompactly by isometries on $X$.
   If $H$ be a quasi-convex subgroup of $G$, then $H$ acts properly and cocompactly on $\textrm{WCH}_X(\Lambda H) \subseteq X$.
  \end{thm}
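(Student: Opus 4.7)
The plan is to separate the two claims. Properness of the $H$-action on $\textrm{WCH}_X(\Lambda H)$ is essentially immediate: $H$ preserves $\Lambda H$ by the definition of the limit set, hence preserves $\textrm{WCH}_X(\Lambda H)$, and the restriction of the proper $G$-action to the subgroup $H$ and the $H$-invariant subspace remains proper. For cocompactness I first reduce to a neighborhood statement: if there exists $r > 0$ and a point $x_0 \in X$ with $\textrm{WCH}_X(\Lambda H) \subseteq N_r(H x_0)$, then $K := \overline{B(x_0, r)} \cap \textrm{WCH}_X(\Lambda H)$ is closed and bounded in the proper space $X$, so compact; and for every $y$ in the hull there is $h \in H$ with $d(h x_0, y) \leq r$, whence $h^{-1} y \in K$ and $y \in H \cdot K$. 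Thus the whole argument reduces to producing such a neighborhood bound.

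The key step is then to exploit quasi-convexity. Because $G$ acts geometrically on $X$, the \v{S}varc--Milnor lemma yields a quasi-isometry $g \mapsto g x_0$ from $G$ with a word metric to $X$; quasi-convexity of $H \leq G$ is preserved under this quasi-isometry in the hyperbolic setting, so $H x_0$ is a quasi-convex subset of $X$: there is a constant $k$ such that every geodesic segment with endpoints in $H x_0$ lies in $N_k(H x_0)$. Tautologically $\Lambda(H x_0) = \Lambda H$. Now fix distinct $\xi, \eta \in \Lambda H$ and a bi-infinite geodesic $\ell$ from $\xi$ to $\eta$. Choose sequences $h_n x_0 \to \xi$ and $h_n' x_0 \to \eta$; each geodesic segment $[h_n x_0, h_n' x_0]$ lies in $N_k(H x_0)$. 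Since the Gromov product $(\xi, \eta)_{x_0}$ is finite, these segments pass within a bounded distance of $x_0$, so an Arzel\`a--Ascoli argument in the proper metric space $X$ extracts a subsequential limit $\ell'$, a bi-infinite geodesic from $\xi$ to $\eta$ contained in $\overline{N_k(H x_0)} \subseteq N_{k+1}(H x_0)$. The Morse lemma for $\delta$-hyperbolic geodesic spaces then places $\ell$ within Hausdorff distance $C\delta$ of $\ell'$, yielding $\ell \subseteq N_{k + 1 + C\delta}(H x_0)$, and consequently $\textrm{WCH}_X(\Lambda H) \subseteq N_{k + 1 + C\delta}(H x_0)$.

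The principal obstacle is the convergence step: ensuring that a subsequence of the segments $[h_n x_0, h_n' x_0]$ converges on compact sets to a genuine bi-infinite geodesic between $\xi$ and $\eta$, rather than degenerating by escaping to infinity near the basepoint. This is controlled precisely by the finiteness of the Gromov product $(\xi, \eta)_{x_0}$, which holds because $\xi \neq \eta$ in the Gromov boundary of a proper hyperbolic space, and by properness of $X$, which permits the diagonal extraction. Once this convergence is in place, the remaining ingredients---the transfer of quasi-convexity under the orbit quasi-isometry and the stability of geodesics in $\delta$-hyperbolic spaces---are standard, and combined with the neighborhood-to-cocompactness reduction they complete the proof. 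The case in which $H$ is finite, or more generally $|\Lambda H| \leq 1$, makes $\textrm{WCH}_X(\Lambda H)$ empty and the conclusion vacuous.
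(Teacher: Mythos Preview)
The paper does not supply its own proof of this theorem; it is quoted as \cite[Main Theorem]{swenson} and used as a black box. There is therefore nothing in the present paper to compare your argument against.

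Your argument is correct and follows a standard route. Properness is inherited by restriction, and cocompactness reduces to the neighborhood inclusion $\textrm{WCH}_X(\Lambda H) \subseteq N_r(H x_0)$, which you obtain by combining (i) the transfer of quasi-convexity of $H$ to quasi-convexity of the orbit $H x_0$ via the \v{S}varc--Milnor quasi-isometry, (ii) an Arzel\`a--Ascoli limit of approximating geodesic segments with endpoints in $H x_0$, anchored near the basepoint by finiteness of the Gromov product, and (iii) the fellow-travelling of any two bi-infinite geodesics with the same ideal endpoints in a $\delta$-hyperbolic space. The edge case $|\Lambda H| \leq 1$ is handled correctly. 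One minor point: you should observe that $X$ is $\delta$-hyperbolic (which follows since $G$ is hyperbolic and acts geometrically on $X$, or is implicit in the paper's standing hypotheses), as this is needed both for the Morse lemma step and for the quasi-isometric invariance of quasi-convexity.
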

 
    \section{Construction of a common cubulation} \label{sec:cubulation}
   
  \subsection{Background: Quasi-actions on trees}
  
   \begin{defn}
      Let $G$ be a group, let $X$ be a metric space, and let $K \geq 1$, $C \geq 0$. A {\it $(K,C)$-quasi-action} of $G$ on $X$ is a map $G \times X \rightarrow X$ denoted $(g,x) \mapsto A_g(x) = g\cdot x$, so that for each $g \in G$, the map $A_g:X \rightarrow X$ is a $(K,C)$-quasi-isometry of $X$, and for each $x \in X$ and $g,h \in G$, the distance between $A_g \circ A_h$ and $A_{gh}$ in the sup-norm is uniformly bounded independent of $g,h \in G$.  
      A quasi-action is {\it cobounded} if there exists a constant $R$ such that for each $x \in X$, the spaces $G \cdot x$ and $X$ are within Hausdorff distance of $R$ from each other. 
      A quasi-action is {\it proper} if for each $R$ there exists $M$ so that for all $x,y \in X$, the cardinality of the set $\{g \in G \, | \, (g\cdot N_R(x)) \cap N_R(y) \neq \emptyset\}$ is at most $M$.
      If a group $G$ quasi-acts on metric spaces $X$ and $Y$, a {\it quasi-conjugacy from $X$ to $Y$} is a quasi-isometry $f:X \rightarrow Y$ which is {\it coarsely $G$-equivariant}, meaning that $d_Y(f(g \cdot x), g \cdot fx)$ is uniformly bounded independent of $g \in G$ and $x \in X$. A tree $T$ is {\it bushy} if every point of $T$ is a uniformly bounded distance from a vertex which has at least three unbounded complementary components.
     \end{defn}
    
    \begin{lemma} (Quasi-action principle.) \label{lem:quasi-action-principle} \cite{moshersageevwhyteI}
    If a group $G$ acts by isometries on a metric space $X$ and $X$ is quasi-isometric to a metric space $X'$, then $G$ quasi-acts on the metric space~$X'$. If a finitely generated group $G$ is quasi-isometric to a metric space $X$, then there is a cobounded and proper quasi-action of $G$ on $X$. 
   \end{lemma}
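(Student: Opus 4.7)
The plan is to prove both statements by transporting the isometric action along a quasi-isometry, using quasi-inverses.

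For (1), let $f: X \to X'$ be a $(K,\epsilon)$-quasi-isometry and let $\bar f: X' \to X$ be a quasi-inverse, meaning $d_X(\bar f \circ f, \Id_X)$ and $d_{X'}(f \circ \bar f, \Id_{X'})$ are bounded by some constant $C_0$ depending only on $K,\epsilon$. Denoting the isometric action of $G$ on $X$ by $g \cdot x$, I would define the quasi-action on $X'$ by $A_g(x') := f\bigl(g \cdot \bar f(x')\bigr)$. Since $x \mapsto g \cdot x$ is an isometry of $X$ and the pre/post-composed maps $f,\bar f$ are $(K,\epsilon)$-quasi-isometries, $A_g$ is a $(K',\epsilon')$-quasi-isometry of $X'$ with constants depending only on $K,\epsilon$ and independent of $g$. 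The coarse compatibility $d_{X'}(A_g \circ A_h, A_{gh}) \leq C$ follows from the computation
\[
A_g(A_h(x')) \;=\; f\bigl(g \cdot \bar f(f(h \cdot \bar f(x')))\bigr)
\]
together with the fact that $\bar f \circ f$ is $C_0$-close to $\Id_X$, combined with the observation that multiplication by $g$ is an isometry (so it does not blow up this error), and then applying $f$ which is a $(K,\epsilon)$-quasi-isometry; this gives $d_{X'}(A_g(A_h(x')), A_{gh}(x')) \leq K C_0 + \epsilon$.

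For (2), equip $G$ with a word metric $d_G$ from a finite generating set, so that the left multiplication action of $G$ on $(G,d_G)$ is by isometries, is proper (indeed free), and is cocompact (indeed transitive). By hypothesis there is a quasi-isometry $f:G \to X$, so by part (1) we obtain a quasi-action $A$ of $G$ on $X$. To verify coboundedness, pick any $x \in X$ and $y \in X$; there exists $g \in G$ with $d_G(g, \bar f(y))$ bounded (in fact one can take $g = \bar f(y)$, viewing $\bar f(y)$ as a group element), so $A_g(\bar f^{-1}\text{-image of basepoint})$ is uniformly close to $y$, showing orbits are quasi-dense. For properness, fix $R > 0$ and suppose $A_g(N_R(x)) \cap N_R(y) \neq \emptyset$ for some $x,y \in X$. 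Unwinding the definition of $A_g$ and using that $f, \bar f$ are $(K,\epsilon)$-quasi-isometries and that the $G$-action on itself is by isometries, this forces $d_G(g \cdot \bar f(x), \bar f(y))$ to be bounded by a constant $R'$ depending only on $R, K, \epsilon$, hence $g$ lies in a ball of $d_G$-radius $R' + d_G(\bar f(x), \bar f(y))$ inside the proper metric space $(G,d_G)$; finiteness of such balls gives the required uniform bound $M$.

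The only non-routine issue is bookkeeping constants to ensure that the various quasi-isometry and closeness errors remain uniform in $g \in G$; the key observation that makes this work is that $g$ acts by honest isometries on $X$, so no cumulative distortion is introduced by the group element itself, and all error terms come from $f$, $\bar f$, and the defect $\bar f \circ f \approx \Id_X$, each of which is bounded independently of $g$.
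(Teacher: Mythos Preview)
The paper does not give its own proof of this lemma; it simply cites Mosher--Sageev--Whyte. So there is nothing to compare against, and your argument is the standard one.

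Your proof of part (1) is correct. In part (2), the coboundedness sketch is fine once cleaned up (take $x_0 = f(e)$ and observe that $A_g(x_0)$ is within bounded distance of $f(g)$, so the orbit is coarsely $f(G)$, which is quasi-dense). However, your properness bound as written does not give a uniform $M$: you conclude that $g$ lies in a $d_G$-ball of radius $R' + d_G(\bar f(x), \bar f(y))$, and that radius depends on $x$ and $y$, so ``finiteness of such balls'' does not yield a bound independent of $x,y$ as the definition requires. The fix is to center the ball correctly: from $d_G(g\cdot \bar f(x), \bar f(y)) \le R'$ and the fact that left multiplication is an isometry of $(G,d_G)$, you get $d_G\bigl(g,\ \bar f(y)\,\bar f(x)^{-1}\bigr) \le R'$, so $g$ lies in a ball of radius $R'$ (centered at a point depending on $x,y$). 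In the word metric on a finitely generated group, all balls of a fixed radius have the same finite cardinality, which gives the uniform $M$.
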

  
   \begin{thm} \cite[Theorem 1]{moshersageevwhyteI} \label{thm:quasi_action}
    If $G \times T' \rightarrow T'$ is a quasi-action of a group $G$ on a bounded valence bushy tree $T'$, then there is a bounded valence bushy tree $T$, an isometric action $G \times T \rightarrow T$, and a quasiconjugacy $f:T \rightarrow T'$ from the action of $G$ on $T$ to the quasi-action of $G$ on $T'$. 
   \end{thm}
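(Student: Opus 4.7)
The plan is to construct $T$ canonically from the boundary at infinity of $T'$. A bounded-valence bushy tree has a Cantor set $\partial T'$ as its space of ends, equipped with an ultrametric-like branching structure: for each pair of ends one can record the depth at which they separate. Quasi-isometries of bounded-valence bushy trees are coarsely determined by their boundary homeomorphisms, and, conversely, any homeomorphism of $\partial T'$ that respects this branching structure up to bounded multiplicative error comes from such a quasi-isometry. So the strategy is to promote the quasi-action at infinity to an honest action on a new tree built from $\partial T'$.

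First I would verify that the $(K,C)$-quasi-action of $G$ on $T'$ induces a genuine action of $G$ on the boundary $\partial T'$ by homeomorphisms. Each quasi-isometry $A_g$ extends continuously to $\partial T'$, and although $A_g \circ A_h$ only agrees with $A_{gh}$ up to bounded additive error on $T'$, this error vanishes at infinity, producing an honest homomorphism $G \to \homeo(\partial T')$. The action preserves the induced branching structure on $\partial T'$ up to bounded error because each $A_g$ is a uniform quasi-isometry. Second, I would build $T$ as a tree of \emph{coarse cuts}: for a sufficiently fine, $G$-invariant family of finite clopen partitions of $\partial T'$, the partitions organize into a tree whose vertices are partitions and whose edges record refinement at a single cut. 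Alternatively, one can realize $T$ as a simplicial tree on which $G$ acts by taking the space of bounded-diameter equivalence classes of quasi-lines in $T'$. Bushiness of $T'$ guarantees that this construction produces a tree that is not collapsed to a point or line, and bounded valence of $T'$ guarantees that each cut has only boundedly many refinements, so $T$ is of bounded valence and bushy as well.

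Third, I would define the quasiconjugacy $f: T \to T'$ by sending a vertex of $T$ (i.e. a partition of $\partial T'$ into finitely many clopen pieces) to an approximate \emph{center} in $T'$ separating the corresponding ends — any vertex realizing such a separation will do, up to bounded ambiguity. Coarse $G$-equivariance is automatic from the construction, since the cuts themselves are permuted $G$-equivariantly and centers are chosen compatibly. The quasi-isometry estimate follows by comparing the branching structures of the two trees: distances in $T$ are measured in terms of how partitions refine one another, and bounded valence of both trees forces this to match distances in $T'$ up to linear distortion.

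The main obstacle will be the second step, namely isolating a $G$-invariant collection of cuts that is simultaneously rich enough to make $f$ quasi-surjective and a quasi-isometric embedding, yet sparse enough that the resulting tree $T$ has bounded valence. Without the bounded-valence hypothesis on $T'$ one could in principle be forced to include too many cuts at a single branch point; without bushiness one could collapse. Balancing these requires a careful choice of which boundary partitions to use — probably those arising from separating pairs of points in $T'$ at a chosen coarse scale — and verifying that the bounded error in the quasi-action does not destroy the tree structure after passing to $G$-orbits.
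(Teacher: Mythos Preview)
The paper does not prove this statement at all: it is Theorem~1 of Mosher--Sageev--Whyte, quoted verbatim with a citation and used as a black box in Lemma~\ref{lemma:H_i_action}. There is therefore no ``paper's own proof'' to compare against.

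That said, your sketch is in the right spirit and loosely parallels what Mosher--Sageev--Whyte actually do: they too pass to the boundary action on the ends of $T'$ and build $T$ from coarse separation data. But your Step~2 is where all the content lies, and you have essentially deferred it. Saying ``take a $G$-invariant family of clopen partitions of $\partial T'$ and organize them into a tree'' is not yet a construction: you have to specify which partitions, prove the family is $G$-invariant (the quasi-action only preserves branching structure up to bounded error, so this is not automatic), prove the resulting incidence relation is a tree, and prove bounded valence. Your alternative suggestion of ``bounded-diameter equivalence classes of quasi-lines'' is also not a construction. You correctly flag this as the main obstacle, but flagging it is not the same as resolving it. If you want to actually carry this out, you should look at how MSW handle it: they work with coarse finite separators (subsets of $T'$ that coarsely disconnect into at least three unbounded pieces), show these are coarsely permuted by the quasi-action, and build $T$ as the simplicial tree dual to the resulting system of cuts. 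The bounded-valence and bushiness hypotheses are used in exactly the place you identify.
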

   
   We will also need the following lemma, which can be deduced from well-known results; see \cite{paulin}. We include a proof for the benefit of the reader.  
   
   \begin{lemma} \label{lem:equiv_b_map}
    If a group $G$ acts by isometries on proper geodesic hyperbolic metric spaces $X$ and $Y$, then a quasi-conjugacy from the action of $G$ on $X$ to the action of $G$ on $Y$ induces a $G$-equivariant homeomorphism $\p X \rightarrow \p Y$. 
   \end{lemma}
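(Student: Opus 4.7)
The plan is to combine the standard boundary-extension result for quasi-isometries of proper geodesic hyperbolic spaces with the coarse equivariance of the quasi-conjugacy $f\colon X\to Y$. Let me denote the actions by $g\cdot_X$ and $g\cdot_Y$ to keep track of which space we are in, and let $C>0$ be a uniform constant such that $d_Y(f(g\cdot_X x),\, g\cdot_Y f(x))\le C$ for every $g\in G$ and $x\in X$.

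First I would invoke the classical fact (see, e.g., \cite{bridsonhaefliger}) that any $(K,\epsilon)$-quasi-isometry between proper geodesic $\delta$-hyperbolic spaces induces a homeomorphism of visual boundaries: for a geodesic ray $\gamma\colon[0,\infty)\to X$ representing $\xi\in\partial X$, the image $f\circ\gamma$ is a quasi-geodesic ray in $Y$, which stays within bounded Hausdorff distance of a genuine geodesic ray, and the boundary point of that ray depends only on $\xi$. This defines the map $\partial f\colon \partial X\to\partial Y$; the same construction applied to a quasi-inverse of $f$ produces the continuous inverse. No hypothesis on the action is needed for this step.

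Next I would verify $G$-equivariance of $\partial f$. Fix $g\in G$ and $\xi\in\partial X$, and pick a geodesic ray $\gamma\colon[0,\infty)\to X$ with $\gamma(t)\to\xi$. Since $g$ acts by isometries on $X$, the translated ray $t\mapsto g\cdot_X \gamma(t)$ is again a geodesic ray, and it converges to $g\cdot_X \xi\in\partial X$; hence $\partial f(g\cdot_X\xi)$ is represented by the quasi-geodesic $t\mapsto f(g\cdot_X \gamma(t))$. On the other hand, the coarse equivariance inequality gives
\[
 d_Y\bigl(f(g\cdot_X \gamma(t)),\; g\cdot_Y f(\gamma(t))\bigr)\;\le\; C
\]
for all $t\ge 0$. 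Thus the two quasi-geodesic rays $t\mapsto f(g\cdot_X\gamma(t))$ and $t\mapsto g\cdot_Y f(\gamma(t))$ have Hausdorff distance at most $C$, and in a proper geodesic hyperbolic space two such rays converge to the same point of the visual boundary. Since $g$ acts on $Y$ by isometries, the second ray converges to $g\cdot_Y \partial f(\xi)$, giving $\partial f(g\cdot_X \xi)=g\cdot_Y \partial f(\xi)$.

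The only delicate point is checking that the two ingredients do what I claim: that coarsely equivalent quasi-geodesic rays have the same boundary endpoint in a proper hyperbolic space, and that the boundary map is continuous. Both are standard stability-of-quasi-geodesics arguments that I would invoke rather than redo. Everything else is formal.
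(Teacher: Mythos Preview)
Your proposal is correct and follows essentially the same argument as the paper's own proof: invoke the standard boundary extension for quasi-isometries, then use that coarse equivariance forces the two quasi-geodesic rays $f(g\cdot_X\gamma)$ and $g\cdot_Y f(\gamma)$ to lie at bounded Hausdorff distance and hence represent the same boundary point. The paper phrases this via the maps $f\circ\iota_g$ and $\iota_g'\circ f$, but the content is identical.
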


    \begin{proof}
     Let $f:X \rightarrow Y$ be a quasi-conjugacy between the $G$-actions. Since $f$ is a quasi-isometry between hyperbolic metric spaces, $f$ induces a homeomorphism from $\p X$ to $\p Y$. To prove this map is $G$-equivariant, for $g \in G$, let $\iota_g:X \rightarrow X$ and $\iota_g':Y \rightarrow Y$ be the isometries given by the $G$-action on $X$ and $Y$, respectively. By definition of quasi-conjugacy, the maps $f \circ \iota_g$ and $\iota_g' \circ f$ commute up to uniformly bounded distance. Let $\rho$ be a quasi-geodesic ray in $X$. Since $X$ and $Y$ are hyperbolic, the image of $\rho$ under both maps is a quasi-geodesic ray in $Y$. Since the maps commute up to bounded distance, the images of $\rho$ under $f \circ \iota_g$ and $\iota_g' \circ f$  are at bounded Hausdorff distance, hence define the same point on the boundary as desired.
    \end{proof}
 
   \subsection{Construction of an action on a tree} \label{subsec:constructingAnAction}
   
   \begin{notation} \label{nota:wch_decomp}
      Suppose that $G, G' \in \mathcal{C}_k$ act properly and cocompactly on a proper geodesic metric space $X$. We use the graph of groups notation set in Remark~\ref{remark:gr_of_gps} and Remark~\ref{remark:same_JSJ_tree}. In particular, as described in Remark~\ref{remark:same_JSJ_tree}, there is a homomorphism $\Phi_T: \isom(X) \rightarrow \isom(T)$ that defines actions of $G$ and $G'$ on $T$. For $v \in VT$, let $G_v \leq G$ and $G_v' \leq G'$ denote the stabilizer of $v$ in $G$ and $G'$, respectively. For $e \in ET$, let $G_e \leq G$ and $G_e' \leq G'$ denote the stabilizer of $e$ in $G$ and $G'$, respectively. 
      
      Since $G$ and $G'$ act transitively on vertices of valence $k$ in $T$, we may assume there is a vertex $v_0 \in VT$ of valence $k$ so that $G_{v_0} = G_{0}$ and $G_{v_0}' = G_{0}'$. Let $v_1, \ldots, v_k$ be the (infinite valence) vertices adjacent to $v_0$. Without loss of generality, $G_{v_i} = G_i$ and $G_{v_i}' = G_i'$ for $1 \leq i \leq k$. 
      
      For each vertex $v \in VT$ and edge $e \in ET$, let 
      \[ X_v = \textrm{WCH}_X(\Lambda G_v) = \textrm{WCH}_X(\Lambda G_v')\quad \textrm{ and } \quad X_e = \textrm{WCH}_X(\Lambda G_e) = \textrm{WCH}_X(\Lambda G_e'),\]
      where the equalities $\textrm{WCH}_X(\Lambda G_v) = \textrm{WCH}_X(\Lambda G_v')$ and $\textrm{WCH}_X(\Lambda G_e) = \textrm{WCH}_X(\Lambda G_e')$ follow from the construction of the JSJ tree given by Bowditch \cite{bowditch}. (In the language of \cite{bowditch}, $\Lambda G_v$ is a {\it necklace} if $v$ has infinite valence and a {\it jump} if $v$ has finite valence.) If $u \in VT$ is a vertex of finite valence, we refer to the subspace $X_u \subset X$ as a {\it peripheral subspace}.       
      By Theorem~\ref{thm:pdc_on_hull}, the groups $G_i$ and $G_i'$ act properly and cocompactly on the space $X_{v_i}$ for $i \in \{0, \ldots, k\}$.
      
      There is a cyclic ordering on $\p X_{v_i}$ for $i \in \{1, \ldots, k\}$ given by the topology of $\p X$. Choose an orientation on the cyclic ordering of the Cantor set $\p X_{v_i}$ as in Definition~\ref{def:op}. Let $\isom^o(X_{v_i})$ denote the isometries of $X_{v_i}$ that preserve the oriented cyclic order on $\p X_{v_i}$.       
      By Proposition~\ref{prop:bow_cyclic_order}, for $i \in \{1, \ldots, k\}$ the groups $G_i$ and $G_i'$  preserve the cyclic ordering on $\partial X_{v_i}$. Moreover, since $G_i$ and $G_i'$ are the fundamental groups of oriented surfaces with boundary, these actions are orientation-preserving.
      Suppose these faithful actions are denoted by homomorphisms $\Phi_i:G_i \rightarrow \isom^{o}(X_{v_i})$ and $\Phi_i':G_i' \rightarrow \isom^{o}(X_{v_i})$.

      For $i \in \{1, \ldots, k \}$, let 
      \[ H_{i} = \big\la \; \Phi_{i}(G_i), \; \Phi_{i}'(G_i') \big\ra \leq \isom^o(X_{v_i}). \] 
      The groups $G_i$ and $G_i'$ embed in $H_i$, and in an abuse of notation, we refer to $\Phi_i(g_i) \in H_i$ and $\Phi_i(g_i')$ as $g_i$ and $g_i'$, where $g_i$ and $g_i'$ are elements defined in Remark~\ref{remark:gr_of_gps}.
     \end{notation}

   \begin{lemma} \label{lemma:H_i_action}
    The group $H_i$ has an action by isometries on a locally-finite bushy tree $T_i$ that is quasi-conjugate to the action of $H_i$ on $X_{v_i}$. There exists a bi-infinite geodesic line $A_i \subset T_i$ so that $g_i$ and $g_i'$ act by non-trivial translation along~$A_i$. 
   \end{lemma}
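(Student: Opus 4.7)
The plan is to construct $T_i$ by converting a quasi-action of $H_i$ on a bushy tree into an isometric action via Mosher--Sageev--Whyte, and then to locate the common axis $A_i$ using boundary dynamics.

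First, I would show $X_{v_i}$ is quasi-isometric to a bounded-valence bushy tree. The surface $\Sigma_i$ has non-empty boundary and negative Euler characteristic, so $G_i = \pi_1(\Sigma_i)$ is virtually free, and a Cayley graph of $G_i$ is quasi-isometric to such a tree $T_i'$. By Theorem~\ref{thm:pdc_on_hull}, $G_i$ acts properly and cocompactly on $X_{v_i}$, so the Milnor--Schwarz lemma provides quasi-isometries between $X_{v_i}$, $G_i$, and $T_i'$. The group $H_i$ acts on $X_{v_i}$ by isometries, so by the Quasi-Action Principle (Lemma~\ref{lem:quasi-action-principle}) it quasi-acts on $T_i'$. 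Applying Theorem~\ref{thm:quasi_action} then yields a bounded-valence, hence locally-finite, bushy tree $T_i$, an isometric action of $H_i$ on $T_i$, and a quasi-conjugacy $T_i \to T_i'$; composition with the quasi-isometry $T_i' \to X_{v_i}$ gives the required quasi-conjugacy $f : T_i \to X_{v_i}$ between the two isometric actions of $H_i$.

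Next, I would identify the common axis. Since $g_i \in G_i$ and $g_i' \in G_i'$ have infinite order and $G, G'$ act geometrically on $X$, both $g_i$ and $g_i'$ act as loxodromic isometries of $X$ by the classification in Proposition~\ref{prop:elliptic}, using that their orbits in $X$ are quasi-isometric embeddings of $\Z$. Both elements stabilize the edge $e_i \in ET$; since Bowditch's construction of the JSJ tree uses only the topology of $\p X$, this edge is determined by a single pair of cut points in $\p X$, forcing $\Lambda G_{e_i} = \Lambda G'_{e_i}$. Thus $g_i$ and $g_i'$ share the same two fixed points on $\p X$, and these lie in $\p X_{v_i}$ since $X_{e_i} \subseteq X_{v_i}$. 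By Lemma~\ref{lem:equiv_b_map}, the quasi-conjugacy $f$ induces an $H_i$-equivariant homeomorphism $\p T_i \to \p X_{v_i}$, so $g_i$ and $g_i'$ have the same pair of fixed points on $\p T_i$. The coarse $H_i$-equivariance of $f$, together with the fact that $\la g_i \ra$-orbits in $X_{v_i}$ are quasi-geodesics, implies that the $\la g_i \ra$-orbit in $T_i$ is also a quasi-geodesic; hence $g_i$ acts loxodromically on $T_i$, and similarly for $g_i'$. A loxodromic isometry of a tree has a unique axis, determined by its two boundary fixed points, so $g_i$ and $g_i'$ share a common axis $A_i \subset T_i$ on which each acts by non-trivial translation.

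The step that I expect to require the most care is verifying that Theorem~\ref{thm:quasi_action} applies to the $H_i$-quasi-action on $T_i'$; although the statement quoted in the excerpt has no explicit hypothesis beyond having a quasi-action on a bounded-valence bushy tree, in practice one may need coboundedness. This is immediate since the subgroup $G_i \leq H_i$ already acts cocompactly on $X_{v_i}$. Moreover, the boundary-dynamics argument above depends only on the coarse equivariance of $f$ and on $g_i, g_i'$ being loxodromic on $X$, so properness of the full $H_i$-action on $X_{v_i}$, which is not given a priori, is not needed to identify the axis $A_i$.
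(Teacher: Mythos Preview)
Your proposal is correct and follows essentially the same approach as the paper: obtain a quasi-action of $H_i$ on a bounded-valence bushy tree via the quasi-action principle, upgrade it to an isometric action using Theorem~\ref{thm:quasi_action}, and then use the $H_i$-equivariant boundary homeomorphism from Lemma~\ref{lem:equiv_b_map} to identify the common pair of fixed points of $g_i$ and $g_i'$ and hence the axis $A_i$. Your version is slightly more explicit in places (Milnor--Schwarz, loxodromicity via Proposition~\ref{prop:elliptic}), but the structure and all key ingredients match the paper's proof.
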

    \begin{proof}      
      The fundamental group of a hyperbolic surface with boundary acts properly and cocompactly on~$X_{v_i}$. Therefore, there is a quasi-isometry $F_i':X_{v_i} \rightarrow T_i'$, where $T_i'$ is a locally-finite bushy tree. Since $H_i$ acts by isometries on $X_{v_i}$,  Lemma~\ref{lem:quasi-action-principle} yields a quasi-action of $H_i$ on $T_i'$.       
      Therefore, by Theorem \ref{thm:quasi_action}, there is a locally-finite bushy tree $T_i$, an isometric action of~$H_i$ on $T_i$, and a quasi-conjugacy $f_{i}:T_i \rightarrow T_i'$ from the action of~$H_i$ on $T_i$ to the quasi-action of~$H_i$ on $T_i'$. Let $F_{i}:T_i' \rightarrow T_i$ be the quasi-inverse of $f_{i}$. Then, $F_{i} \circ F_{i}':X_{v_i} \rightarrow T_i'$ is a quasi-conjugacy.     
      By Lemma \ref{lem:equiv_b_map}, $F_{i} \circ F_{i}'$ induces a $H_i$-equivariant homeomorphism $(F_{i} \circ F_{i}')_{\p}:\p X_{v_i} \rightarrow \p T_i$. 
      Since $\la g_i, g_i' \ra \leq H_i$ stabilizes $X_{v_0} \subseteq X_{v_i}$, the group $\la g_i, g_i' \ra$ stabilizes $\p X_u = \{\alpha_i, \beta_i\} \subseteq \p X_{v_i}$.  Let $A_i \subseteq T_i$ be the unique geodesic connecting these points. Since $F_{i} \circ F_i'$ is a quasi-conjugacy, $g_i$ and $g_i'$ act on $A_i$ by non-trivial translations. 
     \end{proof}

     \begin{remark}
      After possibly replacing $T_i$ with $\textrm{WCH}_{T_i}(\partial T_i)$, we may assume that $T_i$ has no vertices of valence one.
      In particular, if an isometry of $T_i$ fixes $\partial T_i$, then the isometry is the identity.
     \end{remark}

\subsection{Properties of the action}
  
  \begin{defn}
    Let $\cB_i$ denote the set of peripheral subspaces of $X_{v_i}$ as in Notation~\ref{nota:wch_decomp}. Let $\p \cB_i = \{ \partial X_u \subseteq \p X_{v_i} \mid X_u \in \cB_i\}$. Each peripheral subspace is $2$-ended, and each element in $\partial \cB_i$ is a cut pair in $\p G$ as in Proposition~\ref{prop:bow_cyclic_order}. (In the language of~\cite{bowditch}, $\partial \cB_i$ is the set of \emph{jumps} in $\partial X_{v_i}$ and is determined by the topology of $\partial X$.) 
    Moreover, the group $H_i$ leaves the set $\p \cB_i$ invariant.  By Lemma~\ref{lem:equiv_b_map} and Lemma~\ref{lemma:H_i_action}, there exists an $H_i$-equivariant homeomorphism $\phi_i : \p X_{v_i} \rightarrow \p T_i$.  For $b \in \partial \cB_i$, let $\overline{b}$ be the unique bi-infinite geodesic in $T_i$ connecting the two points in $\p (\phi_i (b))$. Let $\overline{\p \cB_i} =\{ \overline{b} \, | \, b \in \partial\cB_i\} \subset T_i$, which we call the set of {\it peripheral lines of $T_i$}.
   \end{defn}
  
    Equip $\p T_i$ with the oriented cyclic order induced by the homeomorphism $\phi_i : \p X_{v_i} \rightarrow \p T_i$. Let $\isom^o(T_i)$ denote the set of isometries of $T_i$ that preserve this oriented cyclic order. Let $\Psi_{i} : H_i \rightarrow \isom(T_i)$ denote the action of $H_i$ on $T_i$ and let $\overline{H}_{i}$ denote $\Psi_{i}(H_i)$. Since $\phi_i$ is $H_i$-equivariant, $\overline{H}_i \leq \isom^o(T_i)$. In addition, there exists a cyclic order on the lines in $\oB$ which is preserved by $H_i$. 
  
  \begin{defn}
    An isometry of $\mathbb{R}$ is \emph{orientation preserving} if it does not exchange the ends at infinity.
    \end{defn}

    An isometry in $\isom^o(T_i)$ that stabilizes a line $\overline{b} \in \oB$ restricts to an orientation preserving isometry of $\overline{b}$, as exchanging the ends of $\overline{b}$ would induce a non-orientation-preserving homeomorphism of $\partial T_i$.

    \begin{lemma} \label{lemma_fix_line_id}
      If $h \in \isom^o(T_i)$ fixes a line $\overline{b} \in \oB$, then $h$ is the identity isometry of $T_i$. 
    \end{lemma}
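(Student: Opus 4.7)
The plan is to show $h$ fixes every vertex of $T_i$, which immediately implies that $h$ is the identity isometry.

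Since $h$ fixes $\overline{b}$ pointwise it fixes the two endpoints $\alpha,\beta \in \partial T_i$ of $\overline{b}$, and since $h$ preserves the oriented cyclic order on $\partial T_i$ it preserves each of the two arcs $S_+, S_- \subset \partial T_i$ cut out by $\{\alpha,\beta\}$ together with the induced linear order on each arc. I would prove by induction on combinatorial distance from $\overline{b}$ that $h$ fixes every vertex of $T_i$. Vertices on $\overline{b}$ are fixed by hypothesis. For the inductive step, suppose $v$ is a fixed vertex and consider the branches $B_1,\ldots,B_n$ at $v$ whose root (the unique neighbor of $v$ inside the branch) is not yet known to be fixed. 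Each $B_j$ is a subtree with nonempty boundary $\partial B_j \subset \partial T_i$, and these boundaries are pairwise disjoint.

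The key step uses the compatibility of the cyclic order on $\partial T_i$ with the tree structure: for each vertex $v$, the boundary of each branch at $v$ is a cyclic interval in $\partial T_i$. Consequently, branches $B_j$ at $v$ lying on a common side $S_\pm$ have linearly ordered boundaries in the induced linear order on $S_\pm$ --- every point of $\partial B_j$ either precedes or succeeds every point of $\partial B_{j'}$. Since $h$ fixes $v$ and preserves $S_\pm$ with its linear order, the induced action on the finite, linearly ordered collection $\{\partial B_j : \partial B_j \subset S_\pm\}$ is an order-preserving permutation and hence the identity. Thus each $B_j$ is preserved as a set, and in particular the root vertex of each $B_j$ is fixed by $h$. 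By induction, $h$ fixes every vertex of $T_i$, and therefore $h$ is the identity isometry.

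The main obstacle I anticipate is justifying the compatibility of the cyclic order on $\partial T_i$ with the tree structure used above --- namely, that the boundary of each branch at each vertex forms a cyclic interval in $\partial T_i$. This should follow from the construction of $T_i$: the cyclic order on $\partial T_i$ is transported via the $H_i$-equivariant homeomorphism $\phi_i$ from the cyclic order on $\partial X_{v_i}$, which arises from a planar structure on the hyperbolic-plane-like space $X_{v_i}$ on which $H_i$ acts by orientation-preserving isometries; this planar compatibility is encoded in the tree $T_i$ via the $H_i$-equivariance of the quasi-conjugacy relating $X_{v_i}$ and $T_i$.
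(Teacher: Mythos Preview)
Your approach has a genuine gap at exactly the point you flag: the claim that for each vertex $v$ of $T_i$ the boundaries of the branches at $v$ form cyclic intervals in $\partial T_i$. This is the statement that $T_i$ is a \emph{plane tree} with respect to the given cyclic order on its boundary, and it does not follow from the construction. The tree $T_i$ is produced by the Mosher--Sageev--Whyte theorem (Theorem~\ref{thm:quasi_action}) from a quasi-action; that theorem outputs an abstract bounded-valence tree together with a quasi-conjugacy, and makes no planarity assertion. The $H_i$-equivariant homeomorphism $\phi_i:\partial X_{v_i}\to\partial T_i$ transports the cyclic order, and $H_i$-equivariance guarantees that elements of $H_i$ preserve this order, but it says nothing about how the clopen partition of $\partial T_i$ induced by the branching at a vertex of $T_i$ sits relative to the cyclic order. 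In general a cyclic order on the Cantor set boundary of a locally finite bushy tree need not make the branch boundaries into intervals, so your inductive step cannot be carried out without substantial further work (for instance, replacing $T_i$ by a different, planar, tree on which $H_i$ still acts by isometries --- which is not obviously available).

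The paper's argument sidesteps this issue entirely. Rather than analysing the combinatorics of branches, it observes that $h$ is elliptic (it fixes a point of $\overline b$), and then argues on the set $\oB$ of peripheral lines: since $h$ preserves the cyclic order on $\oB$ and fixes $b$, it acts as an order-preserving bijection of the linearly ordered set $\oB\setminus\{b\}$, so if $h$ moves some $\overline{b'}$ then all powers $h^n$ move $\overline{b'}$. Bounded packing of $\oB$ in $T_i$ (inherited from bounded packing of the edge groups, Lemma~\ref{lemma:bounded_packing}) then forces the orbit of any point on $\overline{b'}$ to be unbounded, contradicting ellipticity. This is a coarse/dynamical argument using only the cyclic order on the set of peripheral lines --- a cyclic order that is available directly from the topology of $\partial X$ --- rather than any compatibility between the tree's branching structure and the cyclic order on all of $\partial T_i$.
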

    \begin{proof}
      Suppose $h \in \isom^o(T_i)$ fixes $\overline{b} \in \oB$. Then, by Proposition~\ref{prop:elliptic}, $h$ is an elliptic isometry of $T_i$. To prove $h$ fixes $T_i$, it is enough to prove $h$ fixes $\p T_i$. Suppose towards a contradiction $h \cdot x = y$ for some $x \neq y \in \p T_i$. Since $\{ \p \overline{b} \, | \, \overline{b} \in \oB\} \subset \p T_i$ is dense, there exists $\overline{b'} \in \oB$ so that $h \cdot \overline{b'} \neq \overline{b'}$. There is a cyclic order on $\p T_i$ preserved by $h$ and $h \cdot b = b$, so $h^2 \cdot b' \neq b'$. Similarly, $h^n \cdot b' \neq b'$ for all $n \in \N$. Let $x \in b'$. The vertex and edge groups of $G$ and $G'$ have the bounded packing property by Lemma~\ref{lemma:bounded_packing}, so the set $\oB$ satisfies the bounded packing property in $T_i$. So, the diameter of the orbit of $x$ in $T_i$ is unbounded, contradicting the fact that $h$ is an elliptic isometry of~$T_i$. Thus, $h$ fixes $\p T_i$, and, since $T_i$ does not have vertices of valence one, $h$ fixes~$T_i$. 
    \end{proof}

   \begin{lemma} \label{lemma_h_i_proper}
   The group $\isom^o(T_i)$ acts on $T_i$ properly. 
  \end{lemma}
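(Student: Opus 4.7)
The plan is to prove every vertex stabilizer in $\isom^o(T_i)$ is finite; since $T_i$ is locally finite, this yields properness of the action. Indeed, any compact $K \subset T_i$ sits inside a ball $B(v, R)$ for some $v \in VT_i$, so any element sending $K$ into $K$ has $v$-orbit inside $B(v, 2R)$---a finite set by local finiteness---and hence lies in finitely many cosets of $\stab(v)$.

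First I would show that every $h \in \isom^o(T_i)$ permutes the collection $\oB$ of peripheral lines. The endpoint pairs $\phi_i(\partial \cB_i) \subset \partial T_i$ are precisely the \emph{jumps} of the cyclically ordered Cantor set $\partial T_i$: pairs $\{a,b\}$ such that no other point of $\partial T_i$ lies in one of the two arcs between them. Whether a pair is a jump is determined by the quaternary relation of Definition~\ref{cyclicordering}, so any cyclic-order-preserving homeomorphism of $\partial T_i$ preserves jumps. This applies to every element of $\isom^o(T_i)$, and hence each such element preserves $\oB$.

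Next I would show that for each $v \in VT_i$ and each $R > 0$, the set $S = \{\overline{c} \in \oB : d(v, \overline{c}) \leq R\}$ is finite. The peripheral subspaces of $X_{v_i}$ are translates of a $2$-ended (and hence quasi-convex) subspace of $X$, so they have bounded packing in $X$ by Lemma~\ref{lemma:bounded_packing}. The quasi-conjugacy $X_{v_i} \to T_i$ constructed in Lemma~\ref{lemma:H_i_action} sends peripheral subspaces to sets at bounded Hausdorff distance from peripheral lines, so bounded packing transfers to $\oB$ inside $T_i$. Any infinite subcollection of $\oB$ within distance $R$ of $v$ would have pairwise distances at most $2R$, contradicting bounded packing; hence $S$ is finite.

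Finally, fix $v \in VT_i$, pick any $\overline{b} \in \oB$ (nonempty since $\cB_i$ is), and set $R = d(v, \overline{b})$. Then $\stab_{\isom^o(T_i)}(v)$ acts on the finite set $S \ni \overline{b}$ because it preserves both $\oB$ and distances to $v$. This action is faithful: an element fixing every line in $S$ in particular fixes $\overline{b}$ setwise, so by Lemma~\ref{lemma_fix_line_id} it is the identity. Therefore $\stab_{\isom^o(T_i)}(v)$ embeds into the finite group of permutations of $S$ and is finite. The main subtlety is the quasi-isometric transfer of bounded packing from $X$ to $T_i$; it is routine but requires tracking the quasi-isometry constants carefully.
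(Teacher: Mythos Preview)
Your proof is correct and follows essentially the same strategy as the paper: reduce properness to finiteness of vertex stabilizers, use bounded packing to get a finite $\stab(v)$-invariant collection of peripheral lines near $v$, and then invoke Lemma~\ref{lemma_fix_line_id} to see the action on this collection is faithful. The paper maps $\stab(x)$ into the symmetric group $S_{2n}$ on the $2n$ boundary points of the nearby peripheral lines, while you map into permutations of the lines themselves; you also make explicit (via the jump characterization) why $\isom^o(T_i)$ preserves $\oB$, a point the paper leaves implicit. One small remark: when you say an element of the kernel ``fixes $\overline{b}$ setwise'' and then apply Lemma~\ref{lemma_fix_line_id}, note that the lemma really needs $\overline{b}$ fixed pointwise---but this follows immediately since your element also fixes $v$, hence fixes the closest-point projection of $v$ onto $\overline{b}$, and an orientation-preserving isometry of a line with a fixed point is the identity.
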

  \begin{proof} 
     Since $T_i$ is a locally-finite bushy tree, to prove $\isom^o(T_i)$ acts on $T_i$ properly, it is enough to prove that $\stab(x) \leqslant \isom^o(T_i)$ is finite for all $x \in VT_i$, the vertex set of $T_i$. Let $x \in VT_i$. The vertex and edge groups of $G$ and $G'$ have the bounded packing property by Lemma~\ref{lemma:bounded_packing}, so the set $\oB$ satisfies the bounded packing property in $T_i$. Therefore, there exists $r > 0$ such that the subset $\overline{\cB}_x \subseteq \oB$ containing all elements that non-trivially intersect $N_r(x)$ is finite and contains at least two elements. 
     As $\stab(x)$ stabilizes the subset $\overline{\cB}_x$, there is a homomorphism $\phi: \stab(x) \rightarrow S_{2n}$ where $n = |\overline{\cB}_x|$ and $S_{2n}$ denotes the symmetric group on $2n$ elements: the homomorphism is given by the action of $\stab(x)$ on the $2n$ boundary points of the elements in $\overline{\cB}_x$.
     If $h$ is in the kernel of this homomorphism, then $h$ fixes every element in $\overline{\cB}_x$. So, by Lemma~\ref{lemma_fix_line_id}, the element $h$ is trivial. Hence, the kernel of $\phi$ must be trivial. Therefore, $\stab(x)$ is finite and $\isom^o(T_i)$ acts properly on $T_i$.
   \end{proof}

    The following is an immediate consequence of Lemma~\ref{lemma:subgroup} and Lemma~\ref{lemma_h_i_proper}.

   \begin{corollary} \label{cor:finite_Index}
    The groups $\Phi_i(G_i)\cong G_i$ and $\Phi_i'(G_i')\cong G_i'$ are finite-index subgroups of~$\overline{H}_i$. Hence, $\Phi_i(G_i) \cap \Phi_i'(G_i') \leq \overline{H}_i \leq \isom^o(T_i)$ is a finite-index subgroup of both $\Phi_i(G_i)$ and $\Phi_i'(G_i')$.
   \end{corollary}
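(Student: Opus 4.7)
The plan is to apply Lemma~\ref{lemma:subgroup} inside $\overline{H}_i$: if I can show that $\overline{H}_i$ acts properly on $T_i$ while each of $\Phi_i(G_i)$ and $\Phi_i'(G_i')$ (viewed inside $\overline{H}_i$ via $\Psi_i$) acts cocompactly on $T_i$, then Lemma~\ref{lemma:subgroup} forces both subgroups to have finite index in $\overline{H}_i$, whence their intersection has finite index in each.

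Properness of the $\overline{H}_i$-action on $T_i$ is immediate: by construction $\overline{H}_i \leq \isom^o(T_i)$, and Lemma~\ref{lemma_h_i_proper} already establishes that $\isom^o(T_i)$ acts properly on $T_i$. For the statement that $\Phi_i(G_i) \cong G_i$ persists after passing to $\overline{H}_i$, I would check that the restriction $\Psi_i|_{\Phi_i(G_i)}$ has trivial kernel: any element of the kernel acts trivially on $T_i$ and therefore lies in every vertex stabilizer, which is finite by the properness just established. Since $G_i = \pi_1(\Sigma_i)$ is the fundamental group of a surface with boundary and hence torsion-free, this forces the kernel to be trivial; an identical argument applies to $\Phi_i'(G_i')$.

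The remaining step, cocompactness of $\Phi_i(G_i)$ on $T_i$, is the crux. Theorem~\ref{thm:pdc_on_hull} supplies a proper cocompact action of $G_i$ on $X_{v_i}$ via $\Phi_i$, and the proof of Lemma~\ref{lemma:H_i_action} produces a quasi-conjugacy $F_i \circ F_i' \colon X_{v_i} \to T_i$ from the $H_i$-action on $X_{v_i}$ to the $H_i$-action on $T_i$. Restricting the coarse equivariance to the subgroup $\Phi_i(G_i) \leq H_i$ and invoking the standard fact that a quasi-isometry which is coarsely equivariant between two isometric actions on proper geodesic spaces carries coarsely dense orbits to coarsely dense orbits, I conclude that $\Phi_i(G_i)$ acts cocompactly on $T_i$. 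The same reasoning handles $\Phi_i'(G_i')$. Combining with Lemma~\ref{lemma:subgroup} and intersecting two finite-index subgroups of $\overline{H}_i$ then yields the corollary. The only place requiring care is the transfer of cocompactness across the quasi-conjugacy, but because $T_i$ is a proper geodesic space this reduces to the orbit-density criterion and presents no real obstacle.
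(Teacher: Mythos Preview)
Your argument is correct and follows precisely the route the paper takes: the paper simply states that the corollary is an immediate consequence of Lemma~\ref{lemma:subgroup} and Lemma~\ref{lemma_h_i_proper}, and you have filled in the implicit details (properness from Lemma~\ref{lemma_h_i_proper}, cocompactness transferred through the quasi-conjugacy of Lemma~\ref{lemma:H_i_action}, and injectivity of $\Psi_i$ on the torsion-free surface groups). Nothing further is needed.
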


    \subsection{Commensurability of translation numbers}	\label{sec:comm_tran_num}

      \begin{figure}
        \label{figure:tr_length}
	\begin{overpic}[width=.8\textwidth, tics=5]{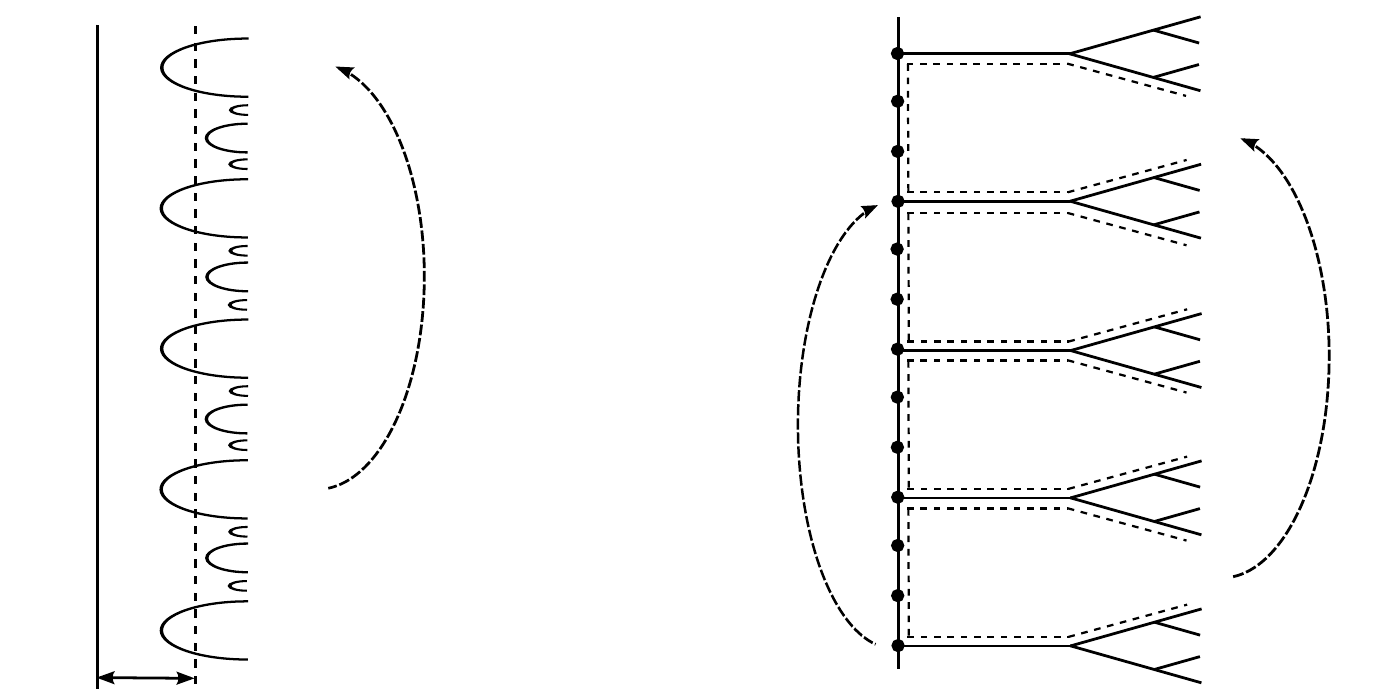}
	 \put(-2,48){$X_{v_i}$}
	 \put(2,38){$X_u$}
	 \put(8,3){\Small{$R$}}
	 \put(16,4){\Small{$p_i^{-1}$}}
	 \put(16,14){\Small{$p_i^0$}}
	 \put(16,24){\Small{$p_i^1$}}
	 \put(16,34){\Small{$p_i^2$}}
	 \put(16,44){\Small{$p_i^3$}}
	 \put(32,30){$t_i$}
	 \put(56,48){$T_i$}
	 \put(60,40){$A_i$}
	 \put(54,20){$\ell_i$}
	 \put(97,25){$t_i$}
	 \put(70,8){\Small{$\overline{\p p_i^0}$}}
	 \put(70,18){\Small{$\overline{\p p_i^1}$}}
	 \put(70,29){\Small{$\overline{\p p_i^2}$}}
	 \put(70,40){\Small{$\overline{\p p_i^3}$}}
	\end{overpic}
	\caption{\small{ An illustration of the peripheral subspaces of $X_{v_i}$ and $T_i$ and the two notions of translation length.}}
    \end{figure}
 
    \begin{notation} \label{nota:tr_len}
	The groups $G_i$ and $G_i'$ embed as finite-index subgroups in $\overline{H}_{i}$, so $G_i$ and $G_i'$ act properly and cocompactly on $T_i$. Let $\ell_i$ and $\ell_i'$ be the translation lengths of $g_i$ and $g_i'$ in $T_i$, respectively, where $g_i$ and $g_i'$ are the elements defined in Remark~\ref{remark:gr_of_gps}. The translation lengths of $g_i, g_i'$ are realized on $A_i \subset T_i$. See Figure~3.
    \end{notation}
   
   The next proposition shows the two vectors that record translation lengths are equivalent up to scalar multiplication by integers. 
   
    \begin{prop} \label{prop:comm_tr_len}
      There exist positive integers $L, L' \in \N$ so that \[L(\ell_1, \ldots, \ell_k) = L'(\ell_1', \ldots, \ell_k').\] 
    \end{prop}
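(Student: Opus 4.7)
The plan is to extract a single global algebraic identity $g_0^L = (g_0')^{L'}$ inside $\isom(X)$ and push it down to each tree $T_i$; this will force the ratios $\ell_i/\ell_i'$ to be constant across $i$. The crucial edge-map identifications from Remark~\ref{remark:gr_of_gps} give $g_i = g_0$ as elements of $G \leq \isom(X)$ and $g_i' = g_0'$ as elements of $G' \leq \isom(X)$, so any relation between $g_0$ and $g_0'$ inside $\isom(X)$ restricts to the invariant subspace $X_{v_i} \supseteq X_{v_0}$ and then descends, via the quasi-conjugacy used to build $T_i$, to a relation between $g_i$ and $g_i'$ in $\overline{H}_i$ for every $i$.

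First I would carry out the local analysis in each $\overline{H}_i$. Since $\{\alpha_i,\beta_i\} = \partial X_{v_0}$ is a peripheral pair of $\partial X_{v_i}$, the axis $A_i \subset T_i$ is a peripheral line. By Lemma~\ref{lemma_fix_line_id} the setwise stabilizer of $A_i$ in $\overline{H}_i$ acts faithfully on $A_i$, and by Lemma~\ref{lemma_h_i_proper} this action is proper; combined with the orientation-preserving hypothesis $\overline{H}_i \leq \isom^o(T_i)$, the stabilizer $\stab_{\overline{H}_i}(A_i)$ is an infinite cyclic group of translations, so $g_i$ and $g_i'$ share a common root $\gamma_i$ and one obtains $g_i^{p_i'} = (g_i')^{p_i}$ in $\overline{H}_i$ and $p_i'\ell_i = p_i\ell_i'$. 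A priori the integers $p_i, p_i'$ depend on $i$, and the entire game is to remove this dependence.

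The heart of the argument is to promote these local relations to a single relation inside $\isom(X)$. Both $\langle g_0\rangle$ and $\langle g_0'\rangle$ act properly and cocompactly on the two-ended weak convex hull $X_{v_0} = \textrm{WCH}_X(\{\alpha,\beta\})$ by Theorem~\ref{thm:pdc_on_hull}, and both lie in the pointwise stabilizer of $\{\alpha,\beta\}$ in $\isom(X)$. I would show that $\langle g_0, g_0'\rangle$ acts properly on $X_{v_0}$; combined with the cocompactness of $\langle g_0\rangle$, Lemma~\ref{lemma:subgroup} then forces $\langle g_0\rangle$ to have finite index in $\langle g_0, g_0'\rangle$, and symmetrically for $\langle g_0'\rangle$. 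Hence $\langle g_0\rangle \cap \langle g_0'\rangle$ has finite index in both cyclic groups, producing integers $L, L' \geq 1$ with $g_0^L = (g_0')^{L'}$ in $\isom(X)$ (after possibly replacing $g_0'$ by its inverse so that both translations of $X_{v_0}$ point the same way). Restricting this equality to $X_{v_i}$ gives the identity $\Phi_i(g_i)^L = \Phi_i'(g_i')^{L'}$ in $H_i$, and applying $\Psi_i$ yields $g_i^L = (g_i')^{L'}$ in $\overline{H}_i$; reading off translation lengths on $A_i$ gives $L\ell_i = L'\ell_i'$ uniformly in $i$.

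The main obstacle is establishing the properness of $\langle g_0, g_0'\rangle$ on $X_{v_0}$. Although $G$ and $G'$ individually act properly on $X$, the subgroup $\langle G, G'\rangle \leq \isom(X)$ need not (witness the Burger--Mozes lattices in products of trees noted in the introduction), so no general principle applies. The argument must genuinely exploit that we have restricted to the common peripheral subgroups pointwise fixing $\{\alpha,\beta\}$: on the two-ended space $X_{v_0}$ the orientation preservation inherited from the cyclic order on $\partial X_{v_0}$ rules out reflections, and the maximality of $\langle g_0\rangle$ and $\langle g_0'\rangle$ as the $G$- and $G'$-stabilizers of $\{\alpha,\beta\}$ (a consequence of the JSJ structure) prevents accumulation of isometries beyond a discrete cyclic translation subgroup, exactly as in the proof of Lemma~\ref{lemma_h_i_proper} where bounded packing was used in the tree setting.
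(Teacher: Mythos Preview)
Your overall strategy—manufacture one relation linking $g_0$ and $g_0'$ that holds for every $i$ at once, then evaluate translation lengths on each $A_i$—is viable and genuinely different from the paper's. The paper instead introduces an auxiliary integer invariant, the \emph{peripheral translation length} $t_i$ (how far $g_0$ shifts the cyclically ordered family of peripheral subspaces of $X_{v_i}$ that pass near $X_{v_0}$), first proves $t_1't_i=t_1t_i'$ for all $i$ by a closest-point-projection and bounded-packing contradiction carried out in $X$ itself, and then separately proves $t_i'\ell_i=t_i\ell_i'$ for each $i$ by a contradiction inside $T_i$; combining the two yields the proposition. Your route bypasses the $t_i$ entirely and is more direct if it can be completed.

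The gap is exactly where you flag it. Properness of $\langle g_0,g_0'\rangle$ on $X_{v_0}$ is asserted but not proved, and the sketched justification fails on each count: there is no cyclic order on the two-point set $\partial X_{v_0}$ to inherit; maximality of $\langle g_0\rangle$ in $G$ and of $\langle g_0'\rangle$ in $G'$ gives no control over $\langle g_0,g_0'\rangle\leq\isom(X)$, where, as you yourself note, $\langle G,G'\rangle$ need not act properly; and the mechanism of Lemma~\ref{lemma_h_i_proper} depends on bounded packing of an infinite family of peripheral lines in a tree, a structure the bare quasi-line $X_{v_0}$ does not possess. Nothing in the setup excludes nontrivial elliptic isometries of $X$ fixing $\partial X_{v_0}$, so the literal equality $g_0^L=(g_0')^{L'}$ in $\isom(X)$ is not available from your argument.

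What rescues the plan is that equality in $\isom(X)$ is unnecessary—equality in each $\overline{H}_j$ suffices. Your second paragraph already shows $\stab_{\isom^o(T_1)}(A_1)$ is infinite cyclic, giving $L,L'$ with $\Psi_1(g_0)^L=\Psi_1(g_0')^{L'}$. The element $h:=g_0^L(g_0')^{-L'}\in\isom(X)$ then has trivial $\Psi_1$-image, hence (via the quasi-conjugacy of Lemma~\ref{lemma:H_i_action}) uniformly bounded displacement on $X_{v_1}$, hence is elliptic on $X$ by Proposition~\ref{prop:elliptic}. For each $j$ the image $\Psi_j(h)$ is therefore elliptic in $T_j$ yet lies in the infinite cyclic translation group $\stab_{\isom^o(T_j)}(A_j)$, forcing $\Psi_j(h)=1$; since $\Psi_j(g_0)$ and $\Psi_j(g_0')$ commute there, reading off translation lengths on $A_j$ gives $L\ell_j=L'\ell_j'$ for every $j$.
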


   To prove the proposition, we introduce another notion of translation number. 
   
   \begin{notation} 
     Let $R >0$ be large enough such that $N_R(X_u) \subset X$ non-trivially intersects at least one peripheral subspace in each $X_{v_i}$.
     Let $\mathcal{P}_i = \{ p \in \mathcal{B}_i \mid p \cap N_R(X_u) \neq \emptyset \}$.
     Note that $\mathcal{P}_i$ is stabilized by both $G_0$ and $G_0'$, where $G_0$ and $G_0'$ are defined in Remark~\ref{remark:gr_of_gps}.
     By Proposition~\ref{prop:bow_cyclic_order}, for each $i$, there is a cyclic order on the set of all peripheral subspaces in $X_{v_i}$ and this order is preserved by any isometry of $X$ which stabilizes $X_{v_i}$.
     Thus, there is an indexing $\mathcal{P}_i = \{p_i^r \mid r \in \mathbb{Z}\}$ such that $p_i^r \leq p_i^s$ if and only if $r \leq s$. See Figure~3. 
   \end{notation}

    \begin{defn}
     For each $i$,  $1 \leq i \leq k$, define the {\it $i^{th}$ peripheral translation length} of $g_0$ and $g_0'$, by $t_i$ and $t_i'$ respectively, where 
     \[ g_0 \cdot p_i^0= p_i^{t_i} \, \textrm{ and }\, g_0' \cdot p_i^0 = p_i^{t_i'}.\]
    \end{defn}

    Assume that $t_i, t_i'\in \Z - \{0\}$ are positive, and observe that
    \begin{eqnarray} \label{eqn:mult}
     g_0^n \cdot p_i^0 = p_i^{nt_i} \, &\textrm{ and }& \, 
     (g_0')^n \cdot p_{i}^{0} = p_i^{nt_i'}.
    \end{eqnarray}
    
    \begin{lemma} \label{lemma:comm_of_per_len}
      There exist non-zero integers $K, K'>0$ so that \[K(t_1, \ldots, t_k) = K'(t_1', \ldots, t_k').\] 
    \end{lemma}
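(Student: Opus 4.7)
The plan is to show that $t_i/t_i'$ is independent of $i$; choosing $K = t_1'$ and $K' = t_1$ then yields $K t_i = t_1' t_i = t_1 t_i' = K' t_i'$ for every $i$, proving the lemma.

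Both $g_0 \in G_0$ and $g_0' \in G_0'$ stabilize the vertex $v_0 \in VT$, and therefore preserve the two-ended subspace $X_{v_0}$. By Theorem~\ref{thm:pdc_on_hull}, the groups $G_0 \cong \Z$ and $G_0' \cong \Z$ each act properly and cocompactly on $X_{v_0}$, so $g_0$ and $g_0'$ have positive translation lengths $\tau_0, \tau_0'$ on $X_{v_0}$. The positivity of $t_i, t_i'$ forces $g_0$ and $g_0'$ to translate $X_{v_0}$ toward the same end, since the linear order on $\mathcal{P}_i$ is consistent with a choice of end of $X_{v_0}$.

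The core of the proof is a comparison computation. By~\eqref{eqn:mult}, for each $k \geq 1$, both $g_0^{k t_i'}$ and $(g_0')^{k t_i}$ send $p_i^0$ to the same subspace $p_i^{k t_i t_i'}$. For any $q \in p_i^0 \cap N_R(X_{v_0})$, since both isometries preserve $N_R(X_{v_0})$, the two image points $g_0^{k t_i'}\cdot q$ and $(g_0')^{k t_i}\cdot q$ both lie in $p_i^{k t_i t_i'} \cap N_R(X_{v_0})$. This intersection has diameter bounded by a constant $D$ independent of $k$: the subspaces $p_i^r$ and $X_{v_0}$ are quasi-convex in the hyperbolic space $X$ with disjoint limit sets in $\partial X$, and $G_0$ permutes $\mathcal{P}_i$ with only finitely many orbits, so the standard bounded-intersection estimate for quasi-convex subsets with disjoint limit sets can be promoted to a uniform constant. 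Combined with the estimates $d_X(q, g_0^{k t_i'}\cdot q) = k t_i' \tau_0 + O(1)$ and $d_X(q, (g_0')^{k t_i}\cdot q) = k t_i \tau_0' + O(1)$, together with the direction coincidence noted above, the triangle inequality gives $|k t_i' \tau_0 - k t_i \tau_0'| \leq D + O(1)$. Dividing by $k$ and letting $k \to \infty$ yields $t_i' \tau_0 = t_i \tau_0'$, whence $t_i/t_i' = \tau_0/\tau_0'$ is independent of $i$. The main technical point will be the uniform diameter bound on $p_i^r \cap N_R(X_{v_0})$, which hinges on the $G_0$-cofiniteness of $\mathcal{P}_i$ combined with standard facts about quasi-convex subsets of hyperbolic spaces.
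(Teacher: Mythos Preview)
Your approach is correct and takes a genuinely different route from the paper's. The paper argues by contradiction: assuming $t_1' t_i \neq t_1 t_i'$ for some $i$, it forms the element $h = g_0^{t_1'}(g_0')^{-t_1}$, observes that $h$ stabilises $p_1^0$ but has nonzero $i$-th peripheral translation length, and then derives a contradiction via closest-point projection onto $X_u$ and bounded packing (the projection of $p_1^0$ stays in a fixed bounded set while that of $p_i^0$ escapes to infinity, impossible for an isometry). Your argument instead introduces the intrinsic translation lengths $\tau_0,\tau_0'$ of $g_0,g_0'$ on $X$ and shows directly that each ratio $t_i/t_i'$ equals the single number $\tau_0/\tau_0'$, treating all indices symmetrically and identifying the common ratio explicitly. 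The cost is the uniform diameter bound on $p_i^r \cap N_R(X_{v_0})$, which you correctly obtain from $G_0$-cofiniteness on $\mathcal{P}_i$ together with the standard bounded-intersection fact for quasi-convex subsets of a hyperbolic space with disjoint limit sets; the paper avoids this by working with closest-point projection instead. Two minor remarks: the direction-coincidence observation is unnecessary, since the reverse triangle inequality $|d(q,a)-d(q,b)|\le d(a,b)$ already yields the estimate once $a,b$ lie in a set of diameter $D$; and you should note that $X_{v_0}$ itself cannot be one of the $p_i^r$ (otherwise $g_0$ would both fix it and shift its index by $t_i>0$), so the disjoint-limit-set hypothesis is legitimate for every $r$.
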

    \begin{proof}
      To prove the lemma, we show 
      $t_1't_i = t_1t_i'$ for all $i$, $1 \leq i \leq k$. 
      Towards a contradiction, suppose $t_1't_i \neq t_1t_i'$ for some $i$. 
      Let 
      \[ h = g_0^{t_1'}(g_0')^{-t_1}.\]
      By Equation (\ref{eqn:mult}) above, 
      \[ h \cdot p_i^0 = p_i^{t_1't_i - t_1t_i'} .\]
      In particular, $h^n$ stabilizes $p_1^0$, but $h^n$ has non-trivial $i^{th}$ peripheral translation length for $i \in \{2, \ldots, k\}$.

      Let $\pi : X \rightarrow X_u$ be the closest-point projection map.  The image $\pi(p_i^0) \subset X_u$ is contained in a bounded subset $Q_i \subset X_u$ for $i \in \{1,\ldots, k\}$. The subset $Q_1$ is fixed by $h$ since $h$ stabilizes~$p_1^0$. However, $d(Q_i, h^nQ_i) \rightarrow \infty$ for $i \in \{2, \ldots, k\}$, since $h$ has non-trivial $i^{th}$ peripheral translation length for $i \in \{2, \ldots, k\}$ and the elements of $\cP_i$ satisfy the bounded packing property. Thus, $d(h^nQ_1, h^nQ_2) \rightarrow \infty$, a contradiction. 
      \end{proof}
 
      \begin{proof}[(Proof of Proposition \ref{prop:comm_tr_len})]
       We prove $t_i'\ell_i = t_i\ell_i'$ for each $i \in \{1, \ldots, k\}$ and apply Lemma~\ref{lemma:comm_of_per_len}. See Figure~3.
       Let $\partial \mathcal{P}_i \subseteq \partial \mathcal{B}_i \cong \p X_{v_i}$ be the image of the map $\psi:\mathcal{P}_i \rightarrow \p X_{v_i}$ defined by $\psi(p_i^r) = \partial p_i^r$. The map $\psi$ is $G_0$-equivariant and $G_0'$-equivariant. 
       
       Let $\overline{\p \mathcal{P}_i} = \{ \overline{\partial p_i^r}\, |\, p_i^r \in \cP_i \} \subseteq \oB \subseteq T_i$. Since $\psi$ is $G_0$-equivariant and $G_0'$-equivariant,
       \[
         g_0 \cdot \overline{\partial p_i^0} = \overline{\partial p_i^{t_i}} \, \textrm{ and } \, g_0' \cdot \overline{\partial p_i^0} = \overline{\partial p_i^{t_i'}}.
       \]     
       Towards a contradiction, suppose that $\ell_it_i' \neq \ell_i't_i$ for some $i \in \{1, \ldots, k\}$. 
       Let $$ h_i = g_0^{t_i'}(g_0')^{-t_i}.$$
       Then, $$h_i \cdot (\overline{\partial p_i^0}) = (\overline{\partial p_i^0}),$$
       but, $h_i$ has translation length $t_i' \ell_i - t_i \ell_i' \neq 0$ in $T_i$.
       Let $a \in A_i$ so it belongs in the axis of $h_i$, where $A_i$ is defined in Lemma~\ref{lemma:H_i_action}.
       Then,
       $$d_{T_i}\big(h_i^n \cdot a, h_i^{n}\cdot \overline{\partial p_i^0} \big) = d_{T_i}\big(h_i^n \cdot a, \overline{\partial p_i^0} \big) \rightarrow \infty,$$
       a contradiction since $h_i$ is an isometry of $T_i$.
         \end{proof}
     
    \subsection{Construction of a cubulation} \label{subsec:cubulation}

      \begin{figure}
        \label{figure:cubulation}
	\begin{overpic}[width=.7\textwidth,tics=10]{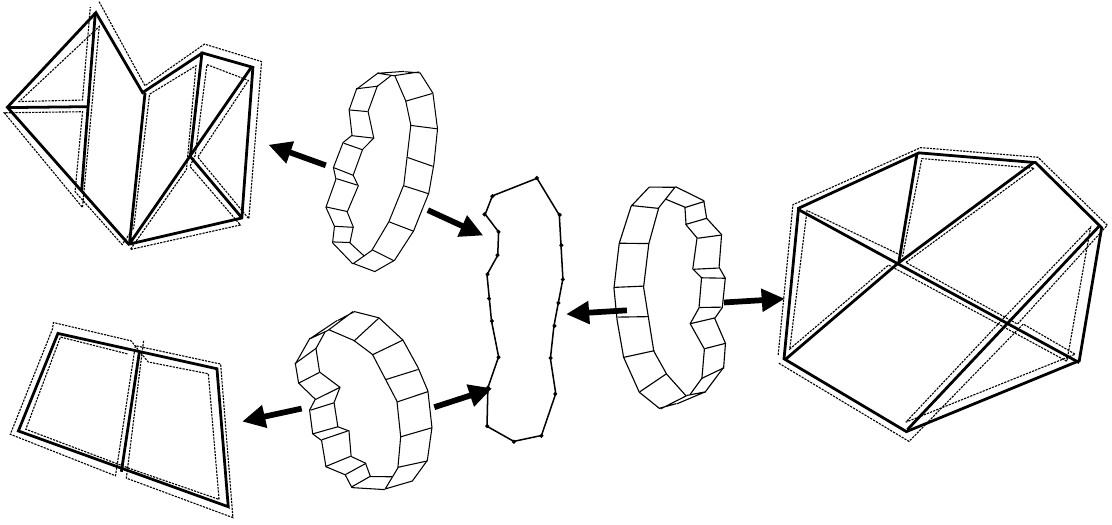}
	 \put(-7,23){$Z$}
	 \put(-5,38){$Z_{1}$}
	 \put(-5,7){$Z_{2}$}
	 \put(83,35){$Z_{3}$}
	 \put(46,33){$Z_{0}$}
	 \put(31,42){$Z_{e_1}\times I$}
	 \put(29,-1){$Z_{e_2} \times I$}
	 \put(55,32){$Z_{e_3} \times I$}
	\end{overpic}
	\caption{A new graph of spaces decomposition $Z$ for $\cG$, the JSJ decomposition of $G$. The spaces $Z_i$ are finite graphs, and the black arrows indicate the attaching maps. Dotted lines indicate the closed immersed paths $\omega_i$ and $I$ denotes the interval $[-1,1]$.}
    \end{figure}
 
   In this section we construct a new common model geometry for $G$ and $G'$.
   We do this by constructing graphs of spaces $Z$ and $Z'$ such that $G = \pi_1(Z)$ and $G' = \pi_1(Z')$. 
   We then give an isomorphism between their universal covers $F : \widetilde{Z} \rightarrow \widetilde{Z}'$. 
   There may be many possible isomorphisms between the universal covers of $Z$ and $Z'$, but we will construct our isomorphism to address the following issue with the existing actions of $G$ and $G'$ on $X$ and $T$: although $T / G$ and $T / G'$ are isomorphic graphs, the $G$-orbits of vertices in $T$ and the $G'$-orbits of vertices in $T$ may not coincide as subsets.
   That is to say, $T / \langle G, G' \rangle$ may be a proper quotient of $T / G$.
   We will construct the isomorphism $F$ to fix this.
   The induced map $F_*: T \rightarrow T$ will send the $G$-orbit of a vertex to a $G'$-orbit of a vertex (see the equality $q = q' \circ F_*$ in the statement of Theorem~\ref{thm:the_isomorphism}).
   This is of vital importance in ensuring that we can embed $G$ and $G'$ as finite-index subgroups in the color-preserving automorphism group in Theorem~\ref{thm:NewAction}.

  \begin{construction} \label{const:common_cube}
      We construct  non-positively curved cube complexes $Z$ and $Z'$ that are graphs of spaces for $\cG$ and $\cG'$, the JSJ decompositions of $G$ and $G'$, respectively. An illustration appears in Figure~4.
  
      As a consequence of Lemma~\ref{lemma:H_i_action}, the groups $G_i$ and $G_i'$ act freely and cocompactly on $T_i$, a locally-finite bushy tree.  Fix a basepoint $\tilde{a}_{i}$ in $A_i \subset T_i$. Orient the line $A_i$; as in Definition~\ref{def:op}.
      This orientation is preserved by both $G_i$ and $G_i'$. 
      Let $Z_i = T_i/G_i$ and let $Z_i' = T_i /G_i'$. 
     Then $Z_i$ and $Z_i'$ are finite graphs such that $G_{i} =\pi_1 (Z_i, a_{i})$ and $G_{i}' = \pi_1(Z_i', a_{i}')$ where $a_{i}$ and $a_{i}'$ are the images of $\tilde{a}_{i}$ in the respective quotients. 
     
     Let $\omega_i \subset Z_i$ and $\omega_i' \subset Z_i'$ denote the image of $A_i$ in the quotient of $T_i$ under the action of $G_i$ and $G_i'$, respectively. Then $\omega_i$ and $\omega_i'$ are closed immersed paths of length $\ell_i$ and $\ell_i'$, respectively, where $\ell_i$ and $\ell_i'$ are defined in Notation~\ref{nota:tr_len}. Orient the paths $\omega_i$ and $\omega_i'$ so that they  lift to the same orientation on $A_i \subset T_i$.
     Subdivide each edge in $Z_i$ and $Z'_i$ into $\frac{\ell}{\ell_i}$ edges, where $\ell = \lcm(\ell_1, \ldots, \ell_k, \ell_1', \ldots, \ell_k')$. Then, after subdivision, the length of $\omega_i$ is equal to $\ell$, and the length of $\omega_i'$ is equal to $\frac{\ell\ell_i'}{\ell_i}$ for $1 \leq i \leq k$. By Proposition~\ref{prop:comm_tr_len}, there exist $L,L' \in \N$ so that $\frac{\ell_i'}{\ell_i} = \frac{L}{L'}$ for all $i = 1, \ldots, k$. Thus, the length of $\omega_i'$ is equal to $\frac{\ell L}{L'}$ for all $i = 1, \ldots, k$. The universal covers $\widetilde{Z}_i$ of $Z_i$ and $\widetilde{Z}_i'$ of $Z_i'$ are both isomorphic to the tree $T_i$ with each edge subdivided into $\frac{\ell}{\ell_i}$ edges. 
      
     Build a non-positively curved cube complex $Z$ that is a graph of spaces for the JSJ decomposition $\cG$ of $G$ given in Remark~\ref{remark:gr_of_gps} as follows.     
     Let the pointed vertex space $(Z_{u_0},z_{u_0})$ be an oriented circle constructed out of $\ell$ many $1$-cubes and $z_{u_0}$ be a zero cube in $Z_{u_0}$. For $i =1, \ldots, k$, let the pointed vertex space $(Z_{u_i},z_{u_i})$ be $(Z_{i}, a_i)$. 
     For each edge $e_i = (u_0,u_i) \in E\G$, let the pointed edge space $(Z_{e_i}, z_{e_i})$ be an oriented circle constructed out of $\ell$ many $1$-cubes and $z_{e_i}$ a $0$-cube on $Z_{e_i}$. The attaching map $\theta^-_{e_i} : (Z_{e_i},z_{e_i}) \rightarrow (Z_{u_0}, z_{u_0})$ is an isomorphism of cube complexes that is orientation-preserving. The attaching map $\theta^+_{e_i} : (Z_{e_i},z_{e_i}) \rightarrow (Z_{u_i}, z_{u_i}) = (Z_i,a_i)$ for $i \in \{1, \ldots, k\}$ is the cubical map sending $(Z_{e_i}, z_{e_i})$ to the based loop $(\omega_i, a_i)$, which also has length $\ell$, so that the map preserves orientation. The space $Z$ is a non-positively curved square complex since the attaching maps $\theta_i^{\pm}$ are locally isometric embeddings.     
      
     Construct $Z'$ in an analogous fashion. The main difference in this case is the vertex space $(Z_{u_0}',z_{u_0}')$ is a circle constructed out of $\frac{\ell L}{L'}$ many $1$-cubes and the edge spaces are also built out of $\frac{\ell L}{L'}$ many $1$-cubes. Then, there exist cubical attaching maps to the curves $\omega_i' \subset Z_i'$ of length $\frac{\ell L}{L'}$.      
     \end{construction}
        
     \begin{assumption}
      For the remainder of the paper, we assume each edge in $T_i$ has been subdivided into $\frac{\ell}{\ell_i}$ edges for $i \in \{1, \ldots, k\}$. 
     \end{assumption}
   
    \begin{notation} \label{nota_q}
      The universal covers $\widetilde{Z}$ and $\widetilde{Z}'$ of the cube complexes $Z$ and $Z'$ built in Construction~\ref{const:common_cube} have a natural tree of spaces decomposition. Indeed, view $\G$ and $T$ as $CW$-complexes. Let $q,q': T \rightarrow \Gamma$ denote the quotient maps from the JSJ tree $T$ to the underlying graph~$\G$ of the JSJ decompositions of $G$ and $G'$, respectively. There are maps $p: \widetilde{Z} \rightarrow T$ and $p':\widetilde{Z}' \rightarrow T$ as follows. There is a map $\bp:Z \rightarrow \Gamma$ defined by mapping each vertex space $Z_{u_i}$ to the vertex $u_i$ for $i = 0, \ldots, k$ and mapping $Z_{e_i} \times [-1,1]$ to $[-1,1] \cong e_i$ for $1 \leq i \leq k$. The map $p$ is a lift of the map $\bp$ to the universal cover $\widetilde{Z}$ which makes the left-hand diagram below commute, where $\pi:\widetilde{Z} \rightarrow Z$ is the covering map.
       \begin{displaymath}
	\xymatrix{
	      \widetilde{Z} \ar[d]^{\pi} \ar[r]^p & T \ar[d]^{q} & & \widetilde{Z}' \ar[d]^{\pi'} \ar[r]^{p'} & T \ar[d]^{q'}\\
	      Z \ar[r]^{\bp} & \G & & Z' \ar[r]^{\bp'} & \G
		  }
      \end{displaymath}
      If $v \in VT$, then let $\widetilde{Z}_v$ denote $p^{-1}(v)$, and for $e \in ET$ let $\widetilde{Z}_e \times [-1,1]$ denote the closure of the $p$-preimage of the interior of $e$. The map $p':\wZ' \rightarrow T$ and the tree of spaces decomposition of $\widetilde{Z}'$ is similar, and we add $'$ to denote the analogous objects.  
      Let $e = (u,v) \in ET$ so that $u$ has finite valence and $v$ has infinite valence. Suppose $q(v) = u_i$ and $q'(v) = u_j$ for some $i,j \in \{1, \ldots, k\}$. Let $\widetilde{\theta}_e^-:\widetilde{Z}_e \rightarrow \widetilde{Z}_u$ denote a lift of the attaching map $\theta_{e_i}^-:Z_{e_i} \rightarrow Z_{u_0}$, and let $\widetilde{\theta}_e^+:\wZ_e \rightarrow \wZ_v$ denote a lift of the attaching map $\theta_{e_i}^+:Z_{e_i} \rightarrow Z_{u_i}$. Similarly, let $\wt_{e}^{'-}:\wZ_e' \rightarrow \wZ_u'$ denote a lift of the attaching map $\theta_{e_j}':Z_{e_j}' \rightarrow Z_{u_0}'$, and let $\wt_{e}^{'+}:\wZ_e' \rightarrow \wZ_v'$ denote a lift of the attaching map $\theta_{e_j}':Z_{e_j}' \rightarrow Z_{u_j}'$.      
      
      If $\wZ$ and $\wZ'$ are isomorphic cube complexes, then an isomorphism $F: \wZ \rightarrow \wZ'$ induces an isomorphism $F_{*}:T \rightarrow T$ determined by the tree of spaces structure of $\wZ$ and $\wZ'$ described above. 
    \end{notation}  
   
  \begin{thm} \label{thm:the_isomorphism}
  	There is an isomorphism $F: \widetilde{Z} \rightarrow \widetilde{Z}'$ such that the induced map $F_* : T \rightarrow T$ satisfies the equality $q(v) = q' \circ F_*(v)$.
  \end{thm}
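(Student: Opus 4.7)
The plan is to construct the cube complex isomorphism $F:\wZ \to \wZ'$ and the induced tree automorphism $F_*:T \to T$ simultaneously, by inductively traversing $T$ outward from a chosen finite-valence basepoint. A preliminary observation is that the subdivisions in Construction~\ref{const:common_cube} are applied uniformly to $Z_i$ and $Z_i'$ (both by the factor $\ell/\ell_i$), so for any $v \in VT$ with $q(v) = u_i$, the vertex space $\wZ_v$ is cube-isomorphic to $T_i$ with each original edge subdivided into $\ell/\ell_i$ cubes; the same cube complex arises as $\wZ'_{v'}$ whenever $q'(v') = u_i$. For finite-valence vertices, $\wZ_v$ and $\wZ'_{v'}$ are both copies of $\R$ with unit edges, as are all edge spaces. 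Consequently, any tree isomorphism $F_*:T \to T$ satisfying $q = q' \circ F_*$ automatically matches each vertex space of $\wZ$ to an abstractly cube-isomorphic vertex space of $\wZ'$.

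For the construction, I would fix a finite-valence vertex $v_0 \in VT$, set $F_*(v_0) = v_0$, and let $F|_{\wZ_{v_0}}$ be any cube isomorphism between two copies of $\R$. Inductively, suppose $F$ and $F_*$ have been defined on all vertex and edge spaces within distance $n$ of $v_0$. For each edge $e$ with one endpoint $u$ within the $n$-ball and the other endpoint $v$ outside, first extend $F$ across $\wZ_e \times [-1,1]$: its behavior on the side attached to $\wZ_u$ is prescribed by $F|_{\wZ_u}$ and the attaching data, which determines a cube isomorphism of edge spaces. Then define $F|_{\wZ_v}: \wZ_v \to \wZ'_{F_*(v)}$ to be a cube isomorphism agreeing with the edge-space extension on the side attached to $\wZ_v$. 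If $v$ has finite valence this is automatic, since the relevant attaching maps are isomorphisms of lines. If $v$ has infinite valence with $q(v) = u_i$, the required isomorphism must carry the specified oriented peripheral line in $\wZ_v \cong T_i$ (the image of the edge-space attaching map) to the corresponding oriented peripheral line in $\wZ'_{F_*(v)} \cong T_i$. The chosen $F|_{\wZ_v}$ then bijects peripheral lines of $\wZ_v$ with those of $\wZ'_{F_*(v)}$, which declares the values of $F_*$ on the remaining edges incident to $v$ and on their other endpoints. Because $T$ is a tree, no cycles need be closed, so the inductive construction is consistent.

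The main obstacle is the inductive step at an infinite-valence vertex: one must exhibit a cube complex isomorphism of $T_i$ (preserving the set of peripheral lines) that sends a given oriented peripheral line to any other specified one. I would invoke the hanging Fuchsian structure of $G_i = \pi_1(\Sigma_i)$. The peripheral lines in $T_i$ are the axes of conjugates of the boundary element $g_i$, and since $g_i$ generates a single conjugacy class in $G_i$, these axes form a single $G_i$-orbit. Moreover, the $G_i$-action preserves the canonical orientation of each peripheral line (the direction in which the corresponding conjugate of $g_i$ translates), so $G_i \subseteq \Aut(T_i)$ acts transitively on oriented peripheral lines; since $G_i$ also preserves the set of peripheral lines as a whole, any such element supplies the required isomorphism and ensures that $F_*$ is well-defined on incident edges. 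With this in hand, the inductive construction yields the desired cube complex isomorphism $F:\wZ \to \wZ'$, and $q = q' \circ F_*$ holds by construction.
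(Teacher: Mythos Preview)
Your overall strategy---an inductive construction of $F$ over increasing balls in $T$, starting from a finite-valence vertex---is the same as the paper's. However, there is a genuine gap at the inductive step for infinite-valence vertices.

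You correctly state that $F|_{\wZ_v}$ must ``agree with the edge-space extension on the side attached to $\wZ_v$'', which is the commuting condition $F_v \circ \tilde\theta_e^+ = \tilde\theta'_{e'}^+ \circ F_e$. But you then reduce this to the weaker requirement of carrying one \emph{oriented} peripheral line to another. These are not the same: the commuting diagram pins down $F_v$ on the line $\tilde\theta_e^+(\wZ_e)$ pointwise, as a specific orientation-preserving isometry, not merely as a bijection of oriented lines. Your $G_i$-transitivity argument produces some $g \in G_i$ sending the first oriented line to the second, but the restriction $g|_{\ell_1}$ may differ from the required isometry by a nontrivial translation of $\ell_2$. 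The stabilizer of $\ell_2$ in $G_i$ only realizes translations by multiples of $\ell$, so you cannot in general adjust $g$ to match. (By Lemma~\ref{lemma_fix_line_id}, any peripheral-line-preserving extension of a given translation is unique if it exists, so this is a genuine obstruction, not just a missing choice.) Relatedly, your base case ``let $F|_{\wZ_{v_0}}$ be any cube isomorphism'' is too loose: a bad choice of $F_{v_0}$ makes the next step impossible.

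The paper handles exactly this issue by tracking basepoints. It maintains the invariant that at every finite-valence vertex $u$, the map $F_u$ sends a $0$-cube projecting to $z_{u_0}$ to a $0$-cube projecting to $z'_{u_0}$. This alignment propagates through the attaching maps so that, at the adjacent infinite-valence vertex, one can choose covering identifications $f_v, f'_{v'}$ with $(T_i, A_i, \tilde a_i)$ that make $F_v = (f'_{v'})^{-1}\circ f_v$ satisfy the commuting diagram automatically. To \emph{re-establish} the basepoint invariant at the next finite-valence vertex $w$, the paper uses Corollary~\ref{cor:finite_Index}: since $G_i \cap G_i'$ has finite index in both, there is a $0$-cube on the relevant peripheral line lying in the common orbit $G_i \cdot \tilde a_i \cap G_i' \cdot \tilde a_i$, and this is what allows the chosen basepoint in $\wZ_w$ to project to $z_{u_0}$ \emph{and} its image in $\wZ'_{w'}$ to project to $z'_{u_0}$. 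Your argument invokes only the $G_i$-action and never this intersection, which is the missing ingredient.
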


  \begin{proof}  
      We exhibit an isomorphism $F:\wZ \rightarrow \wZ'$ by giving isomorphisms $F_v:\wZ_v \rightarrow \wZ'_{v'}$ for all $v \in VT$ and $F_e: \wZ_e \rightarrow \wZ_{e'}'$ for all $e \in ET$, where $F_*(u) = u' \in VT$ and $F_*(e) = e' \in ET$ are specified in the construction. 
      The union of these maps defines an isomorphism $F$ provided the following diagrams commute for all $e = (-e,+e) \in ET$ with $F_*(e) = e' = (-e', +e')$.
     
      \begin{displaymath}
      \xymatrix{ 
    	\widetilde{Z}_{-e} \ar[rr]^{F_{-e}}  &  & \widetilde{Z}_{-e'}'   
    	& &  \widetilde{Z}_{+e} \ar[rr]^{F_{+e}}  &  & \widetilde{Z}_{+e'}'   \\  
    	\widetilde{Z}_{e} \ar[rr]^{F_e} \ar[u]^{\widetilde{\theta}_{e}^-} & & \widetilde{Z}_{e'}' \ar[u]^{\widetilde{\theta'}_{e'}^-} 
    	& & \widetilde{Z}_{e} \ar[rr]^{F_e} \ar[u]^{\widetilde{\theta}_{e}^+} & & \widetilde{Z}_{e'}' \ar[u]^{\widetilde{\theta'}_{e'}^+} \\
    	} \label{diagram:commutes}  \tag{$\ast$}
      \end{displaymath}
  
      A key technical point in the inductive proof we are about to provide is that the $2$-ended vertex spaces in $\widetilde{Z}$ and $\widetilde{Z}'$ need to be consistently aligned, as in Figure~5.
      To ensure this, for all $u$ such that $q(u) = u_0$ and $u' = F_*(u)$ we specify basepoints $\tilde{z}_u, \tilde{z}_{u'}'$,  such that $\pi(\tilde{z}_u) = z_{u_0}$, $\pi'(\tilde{z}_u') = z_{u_0}'$, and $F_u(\tilde{z}_u) =  \tilde{z}_{u'}'$.
  
      \begin{figure}
	\begin{overpic}[scale=.8,tics=10]{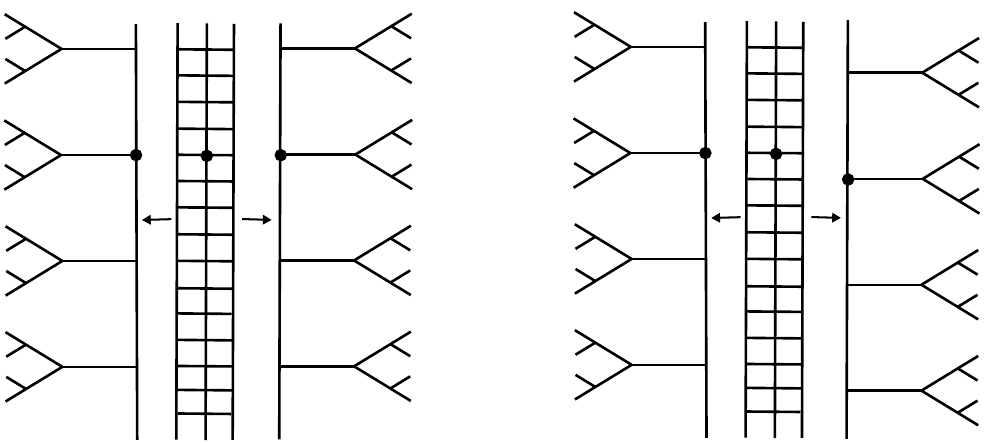} 
	 \end{overpic}
		 \caption{If the diagram on the left arises from the space $\widetilde{Z}$ and the diagram on the right arises from the space $\widetilde{Z'}$, then the 2-ended vertex spaces are misaligned and $\widetilde{Z}$ and $\widetilde{Z}'$ are not isomorphic. }
	 \label{figure:misalign}
   \end{figure}

      We construct $F$ inductively. The base case is obtained as follows. Recall from Notation~\ref{nota:wch_decomp} there exists a vertex $v_0 \in VT$ of valence $k$ such that $q(v_0)=q'(v_0)=u_0 \in V\G$, and if $v_1, \ldots, v_k \in VT$ are the vertices adjacent to $v_0$, then $q(v_i) = q'(v_i) = u_i \in V\G$.  
      The base case is constructing $F_{v_0}$. 
      By construction, $\wZ_{v_0}$ and $\wZ'_{v_0}$ are isomorphic to $\R$ equipped with the standard cube complex structure and an orientation. 
      Let $\wz_{v_0}$ and $\wz'_{v_0}$ be $0$-cubes in $\wZ_{v_0}$ and $\wZ'_{v_0}$, respectively, that project to the basepoints $z_{u_0}$ and $z'_{u_0}$ in $Z_{u_0}$ and $Z'_{u_0}$, respectively. 
      Let $F_{v_0}:(\wZ_{v_0}, \wz_{v_0}) \rightarrow (\wZ'_{v_0}, \wz'_{v_0})$ be a cubical isomorphism that is orientation preserving. 
      
      The inductive assumption is that $F$ has been successfully defined over all vertices and edges in the subtree $\Omega_n = \overline{N}_{2n}(v_0) \subseteq~T$.
      Each vertex $u$ at distance $2n$ from $v_0$ has $q(u) = u_0$ so there are basepoints $\tilde{z}_u$ and $\tilde{z}_{F(u)}$ such that $F_u(\tilde{z}_u) = \tilde{z}_{F(u)}$.
      
        \begin{figure}
      	\begin{overpic}[width=.65\textwidth, tics=5]{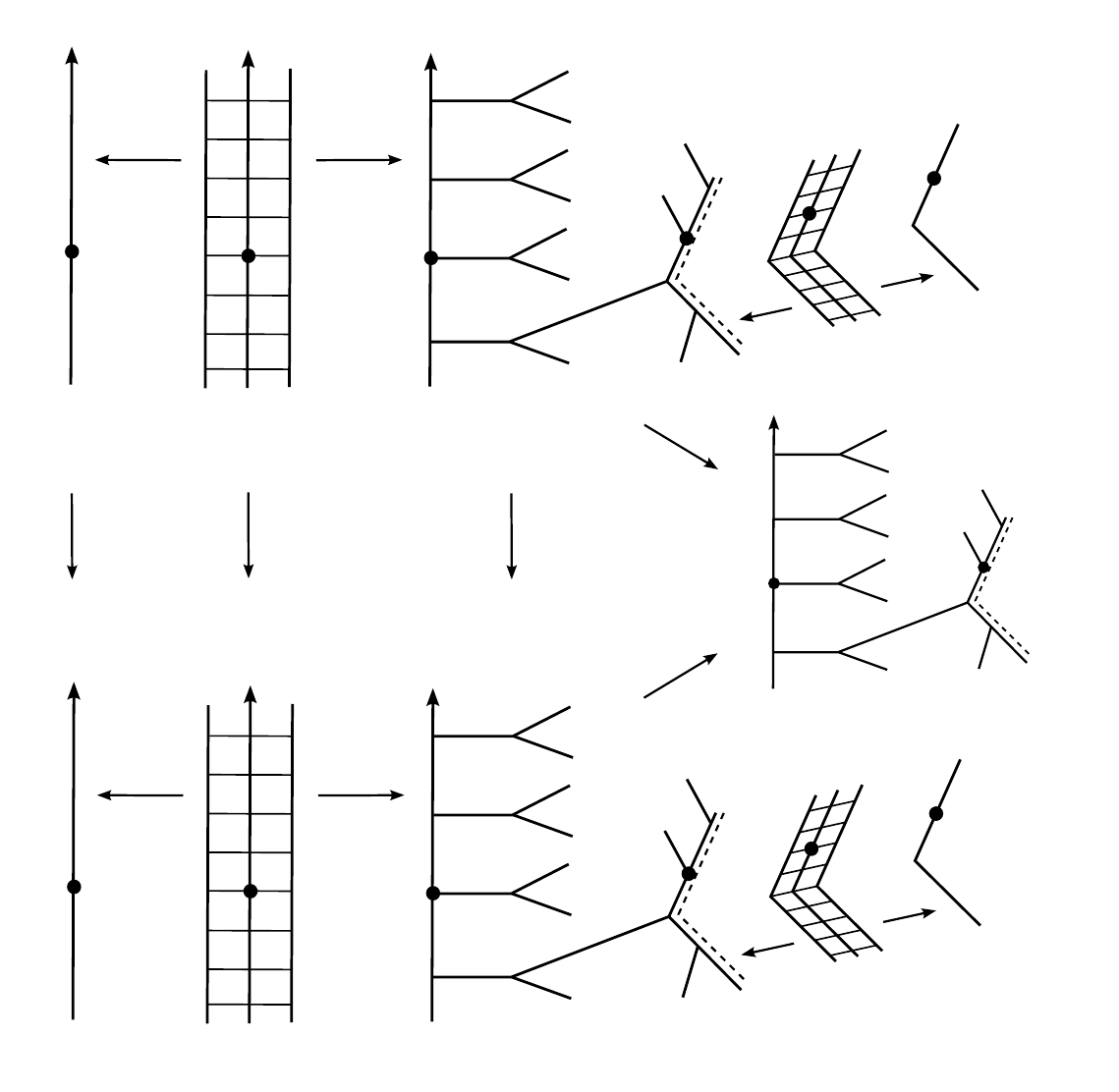}
	\put(5,59){\Small{$\wZ_u$}}
	\put(20.5,59){\Small{$\wZ_e$}}
	\put(43,59){\Small{$\wZ_v$}}
	\put(7.5,49){\Small{$F_u$}}
	\put(23.5,49){\Small{$F_e$}}
	\put(46.5,49){\Small{$F_v$}}
	\put(2.5,75){\Small{$\wz_u$}}
	\put(19,75.5){\Small{$\wz_e$}}
	\put(34.5,75){\Small{$\wz_v$}}
	\put(11,85){\Small{$\widetilde{\theta}_e^-$}}
	\put(31,85){\Small{$\widetilde{\theta}_e^+$}}
	\put(5,2){\Small{$\wZ'_{u'}$}}
	\put(20.5,2){\Small{$\wZ'_{e'}$}}
	\put(43,2){\Small{$\wZ'_{v'}$}}
	\put(1.5,17.5){\Small{$\wz'_{u'}$}}
	\put(18.8,18.7){\Small{$\wz'_{e'}$}}
	\put(34,17.5){\Small{$\wz'_{v'}$}}
	\put(11,29){\Small{$\widetilde{\theta}_{e'}^{'-}$}}
	\put(31,29){\Small{$\widetilde{\theta}_{e'}^{'+}$}}
	\put(59,54){\Small{$f_v$}}
	\put(59,39.5){\Small{$f'_{v'}$}}
	\put(83,57){\Small{$T_i$}}
	\put(65.5,45){\Small{$\widetilde{a}_i$}}
	\put(65,57){\Small{$A_i$}}
	\put(93,40){\Small{$\ell$}}
	\put(90,46){\Small{$a_{\ell}$}}
	\put(75,65){\Small{$\wZ_{\hat{e}}$}}
	\put(86,67){\Small{$\wZ_w$}}
	\put(85,80){\Small{$\wz_w$}}
	\put(65,71){\Small{$\wt_{\hat{e}}^+$}}
	\put(78,74){\Small{$\wt_{\hat{e}}^-$}}
	\put(75,7.5){\Small{$\wZ'_{\hat{e}'}$}}
	\put(86,10){\Small{$\wZ'_{w'}$}}
	\put(85,24){\Small{$\wz'_{w'}$}}
	\put(64.7,14.2){\Small{$\wt_{\hat{e}'}^{'+}$}}
	\put(77.7,17.2){\Small{$\wt_{\hat{e}'}^{'-}$}}
	\end{overpic}
	 \label{figure:induction}
	\caption{\small{An illustration of the maps used in the inductive step of the construction.}}
    \end{figure}
      
      The inductive step requires extending $F$ from $\Omega_n$ to $\Omega_{n+1}$. 
      An illustration of the construction appears in Figure~6. 
      First, suppose $u \in V\Omega_n \subset VT$ is a vertex at distance $2n$ from $v_0$, and let $u'$ be the vertex such that $\widetilde{Z}_u$ is mapped to $\widetilde{Z}_{u'}'$ by $F_u$.
      Thus $q(u)=q'(u') = u_0$. 
      If $e=(u,v)$ is an edge incident to $u$, then there is a unique edge $e' = (u', v')$ such that $q(v) = q'(v') = u_i$ for some $i \in \{1, \ldots, k\}$. 
      This correspondence defines a bijection between the $k$ vertices adjacent to $u$ and the $k$ vertices adjacent to $F_{*}(u)$. 
      Suppose the edge $e=(u,v)$ is not contained in the subtree $\Omega_n$. 
      By our inductive assumption on $\Omega_n$, there exists basepoints $\widetilde{z}_u \in \wZ_u$ and $\widetilde{z}'_{u'} \in \wZ_{u'}'$ such that $F_u(\widetilde{z}_u) = \widetilde{z}'_{u'}$ and $\widetilde{z}_u$ and $\widetilde{z}_{u'}'$ project to the basepoints $z_{u_0} \in Z_{u_0}$ and $z_{u_0}' \in Z_{u_0}'$. 
      Let $\wz_e = (\wt_e^-)^{-1}(\wz_u) \in \wZ_e$ and $\wz_{e'}' = (\wt_{e'}^{'-})^{-1}(\wz'_{u'}) \in \wZ_{e'}'$.      
      Define 
      \[ F_e = \wt^{'-}_{e'} \circ F_u \circ (\wt_e^-)^{-1}: (\wZ_e, \widetilde{z}_e) \rightarrow (\wZ_{e'}', \widetilde{z}'_{e'}). \]
      The map $F_e$ is an orientation-preserving isomorphism since all three maps in its composition are orientation-preserving isomorphisms. By construction, Diagram~(*) commutes.

      Define an isomorphism $F_v:\wZ_v \rightarrow \wZ_{v'}'$ as follows. Let $\wz_v = \wt_e^+(\wz_e)$ and $\wz_{v'}' = \wt_{e'}^{'+}(\wz_{e'}')$. Then, since $\wt_e^+$ and $\wt_{e'}^{'+}$ are lifts of the attaching maps $\theta_{e_i}^+$ and $\theta_{e_i}^{'+}$, the $0$-cubes $\wz_v$ and $\wz_{v'}'$ project to the basepoints $z_{u_i} \in Z_{u_i}$ and $z_{u_i}' \in Z'_{u_i}$, respectively. Therefore, since $q(v) = q'(v') = u_i$, there are isomorphisms 
      \[f_v: (\wZ_v, \wt_e^+(\wZ_e), \wz_v) \rightarrow (T_i, A_i, \widetilde{a}_i) \]
      and 
      \[f'_{v'}: (\wZ'_{v'}, \wt_{e'}^{'+}(\wZ'_{e'}), \wz'_{v'}) \rightarrow (T_i, A_i, \widetilde{a}_i) \]
      obtained from lifts of the isomorphisms $(Z_{u_i}, z_{u_i}) \rightarrow (T_i/G_i, a_i)$, and $(Z_{u_i}', z_{u_i}') \rightarrow (T_i/G_i', a_i')$.
      These lifts respect the cyclic orderings on the boundary of the respective spaces, and respect the orientation on the lines $\wt_e^+(\wZ_e)$, $\wt_{e'}^{'+}(\wZ'_{e'})$, and $A_i$. 
      Thus, the map
      \[F_v = (f'_{v'})^{-1} \circ f_v:(\wZ_v, \wt_e^+(\wZ_e), \wz_v) \rightarrow (\wZ'_{v'}, \wt_{e'}^{'+}(\wZ'_{e'}), \wz'_{v'})  \]
      is an isomorphism, preserving orientation on $\wt_e^+(\wZ_e)$ and $\wt_{e'}^{'+}(\wZ'_{e'})$ and for which Diagram~(*) commutes. 
      Therefore, by extending $F$ along all such edges and vertices we extend $F$ from $\Omega_n = \overline{N}_{2n}(v_0)$ to $\overline{N}_{2n+1}(v_0)$.
      
      For the second step, suppose we continue to consider $u$ at distance $2n$ from $v_0$, an adjacent vertex $v$ at distance $2n+1$ from $v_0$, and let  $w \in VT$ be a vertex incident to $v$ and at distance $2n+2$ from $v_0$. 
      Let $\hat{e} = (w,v) \in ET$. Let $\cA_v = f_v^{-1}(\oB)$ and let $\cA'_{v'} = (f'_{v'})^{-1}(\oB)$, where $f_v$ and $f'_{v'}$ are the isomorphisms defined in the preceding paragraph. The sets $\cA_v$ and $\cA'_{v'}$ are the peripheral lines in $\wZ_v$ and $\wZ'_{v'}$, respectively, and are in one-to-one correspondence with the vertices adjacent to $v$ and $v'$, respectively. Thus, there is an edge $\hat{e}' = (v',w')$ so that $F_v(\wt_{\hat{e}}^+(\wZ_{\hat{e}})) = \wt_{\hat{e}'}^{'+}(\wZ'_{\hat{e}'})$. 
      Then $f_v(\wt_{\hat{e}}^+(\wZ_{\hat{e}})) = f'_{v'}(\wt_{\hat{e}'}^{'+}(\wZ'_{\hat{e}'})) = \ell$, a peripheral line in the tree $T_i$. 
      By Corollary~\ref{cor:finite_Index}, there is a $0$-cube $a_{\ell} \in \ell$ in the orbit of $\widetilde{a}_i$ under the action of $\Phi_i(G_i) \cap \Phi_i'(G_i')$ on $T_i$. 
      In particular, $f_v^{-1}(a_{\ell})$ and $(f'_{v'})^{-1}(a_{\ell})$ project to the basepoints $z_{u_i}\in Z_{u_i}$ and $z'_{u_i} \in Z'_{u_i}$, respectively. Furthermore, $F_v(f_v^{-1}(a_{\ell})) = (f'_{v'})^{-1}(a_{\ell})$.
      Let $\wz_{\hat{e}} = (\wt_{\hat{e}}^+)^{-1}(f_v^{-1}(a_{\ell}))$ and $\wz'_{\hat{e}'} = (\wt^{'+}_{\hat{e}'})^{-1}((f'_{v'})^{-1}(a_{\ell}))$. 
      Define
      \[F_{\hat{e}} = \wt_{\hat{e}'}^{'+} \circ F_v \circ  (\wt_{\hat{e}}^+)^{-1}:(\wZ_{\hat{e}},\wz_{\hat{e}}) \rightarrow (\wZ'_{\hat{e}'}, \wz'_{\hat{e}'}). \]
      Let $\wz_w = \wt_{\hat{e}}^-(\wz_{\hat{e}})\in \wZ_w$ and $\wz'_{w'} = \wt_{\hat{e}'}^{'-}(\wz'_{\hat{e}'}) \in \wZ'_{w'}$. Let
      \[F_w = \wt_{\hat{e}'}^{'-} \circ F_{\hat{e}} \circ (\wt_{\hat{e}}^-)^{-1} : (\wZ_w, \wz_w) \rightarrow (\wZ'_{w'}, \wz'_{w'}). \]
      Then $F_{\hat{e}}$ and $F_w$ are isomorphisms so that Diagram~(*) commutes and the induction hypotheses are satisfied.             
      
      Therefore, the map $F$ extends from $\Omega_n$ to $\Omega_{n+1}$. As $T$ is the ascending union of $\Omega_n$, we conclude by induction there is an isomorphism $F:\wZ \rightarrow \wZ'$ as desired.  
  \end{proof}
  \section{Rigid edge colorings}

   \begin{defn}
      Let $G \in \cC_k$, let $X$ be a model geometry for~$G$, and let $T$ be the JSJ tree for~$G$. A {\it rigid coloring} of $T$ is a map $c:ET \rightarrow \{1,\ldots, k\}$ such that edges adjacent to a finite-valence vertex have different colors and edges adjacent to an infinite-valence vertex have the same color. If $G \leq \isom(T)$, we say a rigid coloring $c$ of $T$ is {\it $G$-invariant} if $c(e) = c(g\cdot e)$ for all $e \in ET$ and $g \in G$. Given a rigid coloring $c$ of $T$, let $\Aut_c(X)$ be the isometries of $X$ that induce a color-preserving map of $T$.  
   \end{defn}
   
   Suppose $G,G' \in \cC_k$ have a common model geometry $X$ with JSJ tree $T$. Define a rigid coloring $c$ of $T$ by $c(e) = i$ if $q(e) = e_i$, where $q:T \rightarrow \G$ is defined in Notation~\ref{nota_q}. Then, the coloring $c$ of $T$ is $G$-invariant, but is not necessarily $G'$ invariant. Theorem~\ref{thm:the_isomorphism} yields a rigid coloring of $T$ that is both $G$-invariant and $G'$-invariant. 
  
  \begin{thm} \label{thm:NewAction}
   If $G, G' \in \cC_k$ act properly and cocompactly on a proper geodesic metric space~$X$, then $G, G'$ act properly and cocompactly on a CAT(0) square complex $\mathcal{X}$ such that the following properties hold. 
   \begin{enumerate}
    \item There is a map $p: \cX \rightarrow T$ that is both $G$-equivariant and $G'$-equivariant, and $p$ determines a tree of spaces structure of $\cX$.
    \item If $u \in VT$ has infinite valence, then the vertex space $\cX_u = p^{-1}(u)$ is a tree. If $u \in VT$ has finite valence, then the vertex space $\cX_u = p^{-1}(u)$ is a copy of $\R$.
    \item If $e \in ET$, then $\cX_e \times [-1,1] = p^{-1}(e)$ is isomorphic to $\mathbb{R} \times [-1,1]$.  
    \item There is a rigid coloring $c: ET \rightarrow \{1, \ldots, k\}$ that is $G$-invariant and $G'$-invariant.
   \end{enumerate}
   Moreover, $G$ and $G'$ are finite-index subgroups of $\textrm{Aut}_c(\mathcal{X})$.   
  \end{thm}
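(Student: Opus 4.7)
I would take $\cX := \wZ$ with the CAT(0) square complex structure from Construction~\ref{const:common_cube}. Let $G$ act on $\cX$ by deck transformations of the covering $\wZ \to Z$, and let $G'$ act via the isomorphism $F:\wZ \to \wZ'$ of Theorem~\ref{thm:the_isomorphism}: for $g' \in G'$, set $g' \cdot x := F^{-1}(g' \cdot F(x))$. Properties (1)--(3) are immediate from the tree-of-spaces structure of Notation~\ref{nota_q}: the map $p:\cX \to T$ is the projection to the Bass--Serre tree, vertex spaces over infinite-valence vertices of $T$ are copies of the trees $T_i$, those over finite-valence vertices are copies of $\R$, and edge spaces are $\R \times [-1,1]$.

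For property~(4) I would define $c: ET \to \{1, \ldots, k\}$ by $c(e) := i$ whenever $q(e) = e_i$. This is a rigid coloring, and it is $G$-invariant since $q$ is the quotient map for the $G$-action on $T$. For $G'$-invariance, observe that the new $G'$-action on $\cX$ induces the action $F_*^{-1} \circ (\text{original } G'\text{-action}) \circ F_*$ on $T$. The key equality $q = q' \circ F_*$ from Theorem~\ref{thm:the_isomorphism} then gives, for $g' \in G'$ and $v \in VT$,
\[ q\bigl(F_*^{-1}(g' \cdot F_*(v))\bigr) = q'\bigl(g' \cdot F_*(v)\bigr) = q'\bigl(F_*(v)\bigr) = q(v), \]
using $G'$-invariance of $q'$. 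Thus the new $G'$-action preserves the fibers of $q$, hence preserves $c$.

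The main obstacle is proving properness of the $\Aut_c(\cX)$-action on $\cX$. I would analyze the stabilizer of a $0$-cube $x \in \cX_v$ via a propagation argument. Reducing to the case that $v$ has finite valence (otherwise pass to an adjacent finite-valence vertex), any $\phi \in \mathrm{Stab}(x)$ induces $\phi_* \in \Aut_c(T)$ fixing $v$. Rigidity of the coloring---all $k$ edges at $v$ carry distinct colors---forces $\phi_*$ to fix each adjacent edge setwise, and hence each adjacent infinite-valence vertex $w$ (its two endpoints have different valences). The restriction $\phi|_{\cX_v}$ is an isometry of $\R$ fixing $x$, so is either the identity or the reflection about~$x$. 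In the identity case, $\phi$ acts as the identity on each adjacent edge strip $\cX_e \times I$, hence on the corresponding peripheral line in $\cX_w \cong T_i$; a variant of Lemma~\ref{lemma_fix_line_id} then yields $\phi|_{\cX_w} = \mathrm{id}$, and inductively propagating through $T$ gives $\phi = \mathrm{id}$. The reflection case contributes at most one additional involution, so the stabilizer has order at most two. The key technical difficulty is verifying that $\phi|_{\cX_w}$ lies in $\isom^o(T_i)$, i.e.\ preserves the cyclic order on $\partial T_i$, so that Lemma~\ref{lemma_fix_line_id} applies; this should follow from the observation that the pattern of edge-strip attachments along peripheral lines, together with the cubical structure of $\cX$, determines the cyclic data---in particular, Corollary~\ref{cor:finite_Index} shows that the restricted action lies in a common finite-index subgroup of $\isom^o(T_i)$ that already contains $\Phi_i(G_i) \cap \Phi_i'(G_i')$.

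Finally, to establish the finite-index statement, I invoke Lemma~\ref{lemma:subgroup}: each of $G$ and $G'$ is contained in $\Aut_c(\cX)$ and acts cocompactly on $\cX$ (the quotients being $\cX/G = Z$ and $\cX/G' \cong Z'$ via $F$, both compact), while $\Aut_c(\cX)$ acts properly by the previous paragraph. Hence $G$ and $G'$ are finite-index subgroups of $\Aut_c(\cX)$, completing the proof.
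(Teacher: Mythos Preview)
Your overall strategy matches the paper's: set $\cX = \wZ$, transport the $G'$-action via $F$, define $c$ through $q$, and prove properness of $\Aut_c(\cX)$ by a propagation argument invoking Lemma~\ref{lemma_fix_line_id}, then apply Lemma~\ref{lemma:subgroup}. Properties (1)--(4) and the $G'$-invariance of $c$ are handled exactly as in the paper.

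There is, however, a genuine gap in your properness argument. You correctly identify the crux: to apply Lemma~\ref{lemma_fix_line_id} to $\phi|_{\cX_w}$ you must know that this restriction preserves the (oriented) cyclic order on $\partial T_i$. Your proposed justification via Corollary~\ref{cor:finite_Index} does not work. That corollary concerns only the images of $G_i$ and $G_i'$ inside $\overline{H}_i$; it says nothing about an arbitrary element $\phi \in \Aut_c(\cX)$, which need not lie in the group generated by the $G$- and $G'$-actions. The vague appeal to ``the pattern of edge-strip attachments'' is not a proof either: a priori the cubical structure of a single vertex tree $\cX_w$ does not remember which of the two cyclic orientations on $\partial \cX_w$ is the correct one.

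The paper's resolution is different and is the missing idea: since $G$ acts geometrically on $\cX$, one has $\partial \cX \cong \partial G$, and by Proposition~\ref{prop:bow_cyclic_order} the cyclic order on $\partial \cX_w$ is determined by the cut-pair structure of $\partial \cX$ itself. Hence \emph{any} isometry of $\cX$ stabilizing $\cX_w$ preserves this cyclic order, and Lemmas~\ref{lemma_fix_line_id} and~\ref{lemma_h_i_proper} apply to the full stabilizer of $\cX_w$ in $\Aut_c(\cX)$. A secondary issue: your reduction ``pass to an adjacent finite-valence vertex'' is not justified, since a point stabilizer at an infinite-valence vertex space need not fix any adjacent finite-valence vertex (all incident edges there carry the same color). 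The paper instead treats the two cases separately, using the homomorphism to $S_{2n}$ from Lemma~\ref{lemma_h_i_proper} at infinite-valence vertex spaces.
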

  \begin{proof}
  By Construction~\ref{const:common_cube}, $G$ is the fundamental group of a non-positively curved square complex $Z$ that is a graph of spaces for the JSJ decomposition $\cG$ of $G$, and $G'$ is the fundamental group of a non-positively curved square complex $Z'$ that is a graph of spaces for the JSJ decomposition $\cG'$ of $G'$.
  By Theorem~\ref{thm:the_isomorphism}, there is an isomorphism $F: \widetilde{Z} \rightarrow \widetilde{Z}'$ such that the induced map $F_*:T \rightarrow T$ satisfies $q' \circ F_*(u) = q(u)$ for all $u\in VT$.
  Therefore, $G'$ acts on $\widetilde{Z}$ by conjugating the action of $G'$ on $\widetilde{Z}'$ by $F$.
  The map $q$ determines a $G$-invariant coloring of $T$ by $c(e) = i$ if $q(u) = e_i$.
  As $q'\circ F_* = q$, the action of $G'$ on $\widetilde{Z}$ also preserves the coloring $c$. Hence, $G,G' \leq \Aut_c(\cX)$, where $\cX = \wZ$. Moreover, Properties~(1)-(4) hold by construction. 
  
  Each vertex space $\mathcal{X}_v$ for $q(v) = v_i$ with $i \in \{1, \ldots, k\}$ is isomorphic to the tree $T_i$. The actions of $G_v$ and $G_v'$ on $\mathcal{X}_v$ correspond to the actions of $G_v$ and $G_v'$ on $T_i$ constructed in Subsection~\ref{subsec:constructingAnAction}.
  The peripheral lines in $\overline{\partial \mathcal{B}_i}$ inside of $T_i$ correspond precisely to the intersections $(\mathcal{X}_e \times [-1,1]) \cap \mathcal{X}_v$ where $e$ is an edge adjacent to $v$.
  As $G$ is quasi-isometric to~$\mathcal{X}$, we deduce $\partial{\cX} \cong \partial{G}$, and by Proposition~\ref{prop:bow_cyclic_order}, the cyclic order on $\partial \mathcal{X}_v$ is given by the cyclic order on~$\p T_i$.
  Therefore, we may apply Lemma~\ref{lemma_fix_line_id} and Lemma~\ref{lemma_h_i_proper} to the subgroup of isometries in $\textrm{Aut}_c(\mathcal{X})$ that stabilize $\mathcal{X}_v$.
  
  To prove the final statement, we first show if an isometry $\gamma \in \Aut_c(\cX)$ fixes a vertex space pointwise, then $\gamma$ is trivial. Indeed, suppose first that $\gamma$ fixes a vertex space $\cX_v$ with $q(v) = u_i$ for some $i\in \{1,\ldots, k\}$. Then, the peripheral lines in $\cX_v$ are fixed, hence the incident edge spaces, which are homeomorphic to $\R \times [-1,1]$, are fixed. Thus, the $2$-ended vertex spaces adjacent to $\cX_v$ are fixed. So, suppose $\gamma$ fixes a $2$-ended vertex space $\cX_u$. The isometry $\gamma$ induces a map on $T$ preserving the rigid coloring of $T$, and each of the finitely many edges incident to $u$ has a unique color. Thus, the action of $\gamma$ on $T$ must fix these edges, and hence $\gamma$ must fix the corresponding edge spaces. Then, by Lemma~\ref{lemma_fix_line_id}, $\gamma$ is the identity on each adjacent infinite-ended vertex space. This argument may be continued, so $\gamma$ is the identity on $\cX$ if $\gamma$ fixes a vertex space. 
  
  Finally, let $x$ be a $0$-cube in $\cX$ contained in a vertex space $\cX_v$; to prove $\Aut_c(\cX)$ acts on $\cX$ properly, it is enough to prove that the stabilizer $\stab(x) \leq \Aut_c(\cX)$ is finite. 
  If $\cX_v$ is homeomorphic to $\mathbb{R}$, then there is an index-2 subgroup of $\stab(x)$ that fixes $\cX_v$.
  By the previous paragraph, this index-2 subgroup must be trivial.
  If $\cX_v$ is homeomorphic to a tree, then there exists $r>0$ such that the set $\cE$ of edge spaces which intersect the neighborhood $N_r(x)$ is finite and contains at least two elements. 
  There is an action of $\stab(x)$ on $\cE$; hence, there is a homomorphism $\stab(x) \rightarrow S_{2n}$, where $n = |\cE|$ and $S_{2n}$ denotes the symmetric group on $2n$-elements: the homomorphism is given by the action of $\stab(x)$ on the $2n$ boundary points of the elements in $\cE$. 
  By Lemma~\ref{lemma_fix_line_id}, the kernel of this homomorphism fixes $\cX_v$ and by the preceding paragraph must be trivial.
  Thus, $\stab(x)$ is finite.
    
  Therefore, $\textrm{Aut}_c(\mathcal{X})$ acts properly and cocompactly on $\mathcal{X}$. Hence, $\Aut_c(\cX)$ contains both $G$ and $G'$ as finite-index subgroups by Lemma~\ref{lemma:subgroup}.
  \end{proof}
 
  \section{Proof of the main theorem} \label{sec:main_thm}
 
    To prove that commensurable groups in $\cC$ act on a common model space, we use the abstract commensurability classification of groups in the class $\mathcal{C}$. The following theorem was shown in \cite{stark} for $k=4$, and easily extends to arbitrary $k$; see also \cite{danistarkthomas}.

      \begin{thm} \cite[Theorem 3.3.3]{stark} \label{thm:acclasses}
	Let $Y, Y' \in \mathcal{Y}_k$ be the unions of $k \geq 3$ surfaces $\Sigma_1, \ldots, \Sigma_k$ and $\Sigma_1', \ldots, \Sigma_k'$, respectively, where each surface has one boundary component, all boundary components of the $\Sigma_i$ are identified, and all boundary components of the $\Sigma_i'$ are identified. Let $G \cong \pi_1(Y)$ and let $G' \cong \pi_1(Y')$. Then $G$ and $G'$ are abstractly commensurable if and only if there exist non-zero integers $L, L' \in \Z$ so that, up to permuting the indices of the $\Sigma_i'$, \[L(\chi(\Sigma_1), \ldots, \chi(\Sigma_k)) = L'(\chi(\Sigma_1'), \ldots, \chi(\Sigma_k')).\] 
      \end{thm}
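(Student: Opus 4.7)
The strategy is to prove the two implications separately. Throughout I use the graph-of-spaces and JSJ notation of Remark~\ref{remark:gr_of_gps}.

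For the direction ``vectors commensurable $\Rightarrow$ groups commensurable,'' assume that after relabeling one has $L\bigl(\chi(\Sigma_1),\dots,\chi(\Sigma_k)\bigr)=L'\bigl(\chi(\Sigma_1'),\dots,\chi(\Sigma_k')\bigr)$. The plan is to build homeomorphic finite covers $\hat Y\to Y$ and $\hat Y'\to Y'$, so that $\pi_1(\hat Y)\cong\pi_1(\hat Y')$ witnesses the abstract commensurability of $G$ and $G'$. For each $i$, I would choose a subgroup of $\pi_1(\Sigma_i)$ of index $L'$ on whose cosets the boundary element acts as a single $L'$-cycle; this is realizable by choosing an appropriate surjection to a finite group, using that $\pi_1(\Sigma_i)$ is free of rank $2g_i\geq 2$. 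The associated connected cover $\hat{\Sigma}_i\to\Sigma_i$ has one boundary component (wrapping $L'$ times) and Euler characteristic $L'\chi(\Sigma_i)$. Analogously build $\hat{\Sigma}_i'\to\Sigma_i'$ of degree $L$. By the classification of compact orientable surfaces with one boundary component, $\hat{\Sigma}_i\cong\hat{\Sigma}_i'$. Gluing these surfaces together along a single common circle covering the central circles of $Y$ and $Y'$ with a compatible degree produces $\hat Y\cong\hat Y'$.

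For the converse, assume there exist finite-index subgroups $H\leqslant G$, $H'\leqslant G'$ and an isomorphism $\phi:H\to H'$. Let $\hat Y\to Y$ and $\hat Y'\to Y'$ be the covers corresponding to $H$ and $H'$; both are $K(H,1)$ spaces. Since $G$ and $G'$ are one-ended hyperbolic with canonical JSJ decompositions, the JSJ decomposition of $H$ computed via either embedding is canonically isomorphic, and $\phi$ realizes this isomorphism. This yields a bijection between the surface-type JSJ vertices of $\hat Y$ and those of $\hat Y'$; corresponding vertex groups are isomorphic free groups, and so have the same rank, hence the same Euler characteristic. The central goal is to upgrade this bijection to a single permutation $\sigma\in S_k$ with the property that every lift of $\Sigma_i$ in $\hat Y$ matches a lift of $\Sigma_{\sigma(i)}'$ in $\hat Y'$. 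Each two-ended vertex of $\hat Y$ is adjacent to exactly $k$ surface-type vertices labelled by $\{1,\dots,k\}$ via the covering $\hat Y\to Y$, and similarly for $\hat Y'$. The JSJ isomorphism permutes these $k$ adjacent vertices at each two-ended vertex; I would argue the local permutations agree globally by analyzing the $H$-action on the JSJ tree, using connectivity of the tree together with the fact that each surface type appears precisely once at every two-ended vertex. Once the global $\sigma$ is in hand, summing Euler characteristics of the lifts of $\Sigma_i$ in $\hat Y$ and of $\Sigma_{\sigma(i)}'$ in $\hat Y'$ yields $[G:H]\,\chi(\Sigma_i)=[G':H']\,\chi(\Sigma_{\sigma(i)}')$ for each $i$, so $L=[G':H']$ and $L'=[G:H]$ are the required integers.

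The main obstacle is the combinatorial step above, namely showing that the local permutation of surface-types induced by $\phi$ at each two-ended JSJ vertex is the \emph{same} permutation at all such vertices. A priori the assignment could vary, and the argument must exploit the coloring structure inherited from $Y$ and $Y'$ on the JSJ tree of $H$ together with transitivity properties of the $H$-action to rule this out; the situation is somewhat analogous to the rigid edge coloring argument of Theorem~\ref{thm:NewAction}, where consistency was a nontrivial feature. A secondary technical point is arranging the boundary cycle structures in the first direction so that the $k$ surface covers can be simultaneously glued to a common cover of the central circle, but this is just a matter of choosing the $k$ index-$L'$ subgroups with matching cycle types on the boundary.
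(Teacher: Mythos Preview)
This theorem is not proved in the present paper; it is quoted from~\cite{stark} (for $k=4$, with the remark that the argument extends to general~$k$) and~\cite{danistarkthomas}, so there is no proof here against which to compare your proposal. That said, two comments on the proposal itself.

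In the forward direction your covering construction is the right idea and is essentially what is done in the cited references, but it needs adjustment. First, the degrees are interchanged: from $L\chi(\Sigma_i)=L'\chi(\Sigma_i')$ you obtain $\chi(\hat\Sigma_i)=\chi(\hat\Sigma_i')$ by taking the cover of $\Sigma_i$ of degree~$L$ and that of $\Sigma_i'$ of degree~$L'$, not the other way round. Second, your existence claim is false as stated: the boundary curve of $\Sigma_i$ is a product of commutators, so under any permutation representation its image lies in the alternating group, whereas a $d$-cycle is an odd permutation when $d$ is even; hence a connected degree-$d$ cover of $\Sigma_i$ with a single totally wrapped boundary circle exists only when $d$ is odd. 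The repair is to allow several boundary components with matching wrapping numbers across all~$i$ and to cover the central circle by several circles accordingly.

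In the converse direction you have correctly located the crux but have not resolved it, and the sketch you give does not work. Moving from one two-ended vertex $u$ to an adjacent one $u'$ through an infinite-valence vertex of color~$i$ yields only $\sigma_u(i)=\sigma_{u'}(i)$; no information about the other $k-1$ colors is transmitted along that step, so connectivity of $T$ alone cannot force the local permutations to coincide. Your appeal to Theorem~\ref{thm:NewAction} is misplaced: there the color-consistency of $F_*$ is \emph{engineered} in the inductive construction of~$F$, whereas here $\phi$ is handed to you and consistency must be extracted from it. This is the substantive half of the theorem, and in~\cite{stark} it is handled by different means (topological rigidity of the covering spaces together with an Euler-characteristic count over each $\Sigma_i$); as written, your proposal leaves this direction incomplete.
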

    
    Finally, we collect the results above to prove the main theorem of the paper. 
    
     \begin{thm} 
    Let $G, G' \in \mathcal{C}_k$. The following are equivalent.
      \begin{enumerate}
	 \item The groups $G$ and $G'$ act properly and cocompactly by isometries on the same proper geodesic metric space.
	 \item The groups $G$ and $G'$ are abstractly commensurable.
	 \item There exists a group $\mathfrak{G}$ that contains $G$ and $G'$ as finite-index subgroups.
      \end{enumerate}
  \end{thm}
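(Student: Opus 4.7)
The plan is to establish the cyclic chain of implications $(3) \Rightarrow (1) \Rightarrow (3) \Leftrightarrow (2)$, drawing on the three pillars of the paper: the construction of a common cubulation, Theorem~\ref{thm:NewAction} on color-preserving automorphisms, and Theorem~\ref{thm:acclasses} on abstract commensurability. The implication $(3) \Rightarrow (1)$ is essentially formal: given an overgroup $\mathfrak{G}$ containing $G$ and $G'$ as finite-index subgroups, the group $\mathfrak{G}$ is finitely generated and acts properly and cocompactly on any Cayley graph, and the restriction of this action to either finite-index subgroup inherits both properness and cocompactness, yielding a common model geometry.

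The implication $(1) \Rightarrow (3)$ is the technical heart of the paper and is immediate from Theorem~\ref{thm:NewAction}: starting from a common model geometry $X$, that result manufactures a CAT(0) square complex $\mathcal{X}$ together with a rigid edge coloring $c$ of the JSJ tree $T$ which is simultaneously $G$-invariant and $G'$-invariant, and shows that the color-preserving automorphism group $\mathrm{Aut}_c(\mathcal{X})$ acts properly and cocompactly on $\mathcal{X}$ while containing both $G$ and $G'$ as finite-index subgroups. Setting $\mathfrak{G} = \mathrm{Aut}_c(\mathcal{X})$ is the desired overgroup. Thus the bulk of the argument for the main theorem has already been carried out, and the role of the final proof is simply to assemble the implications.

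For the equivalence $(2) \Leftrightarrow (3)$, the direction $(3) \Rightarrow (2)$ is straightforward: if $G$ and $G'$ are both finite-index subgroups of $\mathfrak{G}$, then $G \cap G'$ has finite index in each, so $G$ and $G'$ are abstractly commensurable by definition. For $(2) \Rightarrow (3)$, I would invoke Theorem~\ref{thm:acclasses} together with the minimal-element construction of \cite{stark,danistarkthomas}: abstract commensurability is governed by the proportionality of Euler characteristic vectors, and within each commensurability class in $\mathcal{C}_k$ there is a distinguished minimal element $H$ realized by a simple surface amalgam whose Euler characteristic vector is the primitive integer multiple of the common direction; explicit covering arguments show that any $G \in \mathcal{C}_k$ commensurable with $H$ is a finite-index subgroup of $H$, so $H$ plays the role of $\mathfrak{G}$.

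I expect no new obstacle to arise in this final step since each implication reduces to a result established earlier in the paper; the only care needed is to identify the overgroup correctly in each direction (the color-preserving automorphism group in one case, the minimal simple surface amalgam in the other) and to cite the appropriate prior result, in particular applying Lemma~\ref{lemma:subgroup} to turn proper actions by finite-index subgroups into proper cocompact actions by the overgroup.
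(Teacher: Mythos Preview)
Your proposal is correct and follows essentially the same approach as the paper: the paper also establishes $(1)\Leftrightarrow(3)$ via Theorem~\ref{thm:NewAction} (taking $\mathfrak{G}=\Aut_c(\mathcal{X})$) and $(2)\Leftrightarrow(3)$ via Theorem~\ref{thm:acclasses} and the minimal element $\pi_1(Y_0)$ in the commensurability class, with the formal implications $(3)\Rightarrow(1)$ and $(3)\Rightarrow(2)$ handled exactly as you describe. Your closing remark about Lemma~\ref{lemma:subgroup} is slightly misplaced---that lemma is already consumed inside the proof of Theorem~\ref{thm:NewAction} rather than in the final assembly---but this does not affect the argument.
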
 
    \begin{proof}
     We first show conditions (2) and (3) are equivalent. Suppose $G$ and $G'$ are abstractly commensurable. By Theorem~\ref{thm:acclasses}, the groups $G$ and $G'$ are finite-index subgroups of the same group $\mathfrak{G}$, where $\mathfrak{G} \cong \pi_1(Y_0)$ and $Y_0$ is the union of surfaces whose Euler characteristics have no common divisor. Thus, (2) implies (3), and clearly, (3) implies~(2). 
     
     We now show conditions (1) and (3) are equivalent. Suppose first that condition (3) holds. Then any model geometry for $\mathfrak{G}$ is a common model geometry for $G$ and $G'$, proving (1). If condition (1) holds, then condition (3) holds by Theorem~\ref{thm:NewAction}.
    \end{proof}
    
    \begin{remark} \label{remark:RACG}
     Groups in the class $\mathcal{C}_k$ are quasi-isometric to certain right-angled Coxeter groups, including those with defining graph (and nerve) a planar graph called a {\it $3$-convex generalized $\Theta$-graph}; see \cite{danithomas,davis},\cite[Definition 1.6]{danistarkthomas} for definitions and background. If $W_{\Lambda}$ is such a right-angled Coxeter group, then the JSJ decomposition of $W_{\Lambda}$ is similar to the JSJ decomposition of a group in $\cC_k$ as in Remark~\ref{remark:gr_of_gps}; in particular, the JSJ decomposition of $W_{\Lambda}$ has underlying graph $\G$ constructed for some $k \in \N$. This JSJ decomposition has one two-ended vertex group $\la v,w \, | \, v^2= w^2 = 1 \ra \cong D_{\infty}$, the infinite dihedral group. Thus, to apply the results in Section~\ref{sec:cubulation}, one considers the infinite-order element $vw \in W_{\Lambda}$ and its image in each of the maximally hanging Fuchsian vertex groups. The actions of the maximally hanging Fuchsian vertex groups on the trees $T_i$ guaranteed by Lemma~\ref{lemma:H_i_action} are not free, rather properly discontinuous, so the quotients are orbi-complexes, which can be glued together along homeomorphic $1$-dimensional suborbifolds. Theorem~\ref{thm:NewAction} extends to this setting. The abstract commensurability classification of hyperbolic right-angled Coxeter groups with defining graph a generalized $\Theta$-graph is given in \cite[Theorem 1.8]{danistarkthomas}, and one can deduce Theorem~\ref{maintheorem} holds for this class of groups as well. 
    \end{remark}

\bibliographystyle{alpha}
\bibliography{ACQI}

\end{document}